\documentclass[oneside,11pt]{amsart}

\makeatletter
\@namedef{subjclassname@2020}{
  \textup{2020} Mathematics Subject Classification}
\makeatother
\usepackage[a4paper,width=140mm,top=27mm,bottom=27mm]{geometry}
\usepackage{cases,xcolor}
\usepackage{amsmath,amssymb}
\usepackage[hidelinks]{hyperref}

\newtheorem{theorem}{Theorem}[section]
\newtheorem{lemma}[theorem]{Lemma}
\newtheorem{definition}[theorem]{Definition}

\theoremstyle{definition}
\newtheorem{remx}[theorem]{Remark}
\newenvironment{remark}
  {\pushQED{\qed}\remx}
  {\popQED\endremx}

\newcommand{\N}{\mathbb{N}}
\newcommand{\R}{\mathbb{R}}

\newcommand{\T}{\mathbb{T}}

\newcommand{\pt}{\partial}

\newcommand{\Z}{{\mathbb{Z}}}

\numberwithin{equation}{section}

\begin{document}

\address{Yongming Luo
\newline \indent
Faculty of Computational Mathematics and Cybernetics
\newline \indent Shenzhen MSU-BIT University, China}
\email{luo.yongming@smbu.edu.cn}

\title[Dancer-type solutions via semivirial-vanishing geometry]{On Dancer-type solutions for the Lane--Emden equation via semivirial-vanishing geometry}
\author{Yongming Luo}

\begin{abstract}
Aubin--Talenti bubbles describe the decaying positive solutions of the
zero-frequency critical Lane--Emden equation in Euclidean space. By appealing to bifurcation methods, Dancer
constructed in his seminar paper \cite{DancerSolution} positive-frequency solutions to the Lane--Emden equation which decay in
the noncompact directions and are periodic in one direction.  Alternatively, we
give in this paper an energy-based variational construction of such Dancer-type solutions via
the semivirial-vanishing geometry developed in author's recent work for studying focusing NLS on waveguide manifolds.  The main new ingredient is a strict
sub-bubbling estimate below the Euclidean Sobolev threshold.  Unlike the usual
Brezis--Nirenberg mechanism, no lower-order focusing perturbation is available
in our model.  Instead, the energy drop is produced by the bounded periodic
direction: truncating a Euclidean bubble to one period removes a leading-order
part of the gradient tail, while the nonlinear tail is of lower order.  This
restores compactness of minimizing sequences and yields normalized ground
states for every prescribed mass, thereby answering an open question from
\cite{Luo_energy_crit}.
\end{abstract}

\keywords{Dancer-type solutions, semivirial-vanishing geometry, waveguide manifold}
\subjclass[2020]{35Q55, 35A15, 35B40, 49J40}

\maketitle

\section{Introduction}
Let
\[
        D=d+1,
        \qquad
        2^*=\frac{2D}{D-2}=2+\frac4{d-1},
        \qquad d\ge2.
\]
The classical energy-critical Lane--Emden equation on $\R^D$ is given by
\begin{equation}
\label{eq:critical-LE-euclidean}
        -\Delta U=U^{2^*-1}.
\end{equation}
Its finite-energy positive solutions are precisely the Aubin--Talenti bubbles, up to translations and dilations; this is the content of the sharp Sobolev theory of Talenti and the classification theory for positive critical solutions \cite{Talenti1976,Liouville3}.  Thus, in the fully Euclidean setting, the zero-frequency equation \eqref{eq:critical-LE-euclidean} has a rigid family of decaying solutions.

A remarkable phenomenon discovered by Dancer \cite{DancerSolution} is that this rigidity disappears once one imposes a periodic structure in one direction.  By a bifurcation argument, Dancer showed that semilinear elliptic equations on Euclidean space can possess positive solutions which decay in all but one direction and are periodic in the remaining direction. In the critical setting considered here, such solutions are naturally modeled by the positive-frequency Lane--Emden equation on \(\mathbb R^d\times\mathbb T\), namely, we consider positive solutions on the waveguide manifold
\[
        \mathbb X:=\R^d_x\times\T_y,
        \qquad \T=\R/2\pi\Z,
\]
of the positive-frequency Lane--Emden equation
\begin{equation}
\label{nls}
        -\Delta_{x,y}u+\beta u=u^{2^*-1},
        \qquad \beta>0.
\end{equation}
We shall refer to such solutions as Dancer-type solutions.  The aim of this paper is to construct such solutions by a variational energy method and, at the same time, to obtain quantitative information which is not visible from the bifurcation construction.

The waveguide geometry $\R^d\times\T$ is also a natural setting for nonlinear Schr\"odinger equations arising in nonlinear optics and related physical models; see, for instance, \cite{waveguide_ref_1,waveguide_ref_2,waveguide_ref_3}.  From the mathematical point of view, dispersive equations on product spaces have been studied extensively in the last decade.  Among all, we mention the representative work of Terracini--Tzvetkov--Visciglia on ground states on product spaces \cite{TTVproduct2014}, the scattering results of Tzvetkov--Visciglia \cite{TNCommPDE,TzvetkovVisciglia2016}, and the energy-critical scattering theory of Hani--Pausader on $\R\times\T^2$ \cite{HaniPausader}.  In the focusing case, a series of works of the author introduced and developed the {\it{semivirial-vanishing geometry}} as a variational mechanism for detecting sharp scattering/blow-up thresholds and normalized ground states on waveguides \cite{luo2021sharp,Luo_inter,Luo_energy_crit}.  We also mention subsequent related developments in critical and combined-power waveguide problems \cite{RmT1,R1T1Scattering,ZhaoZheng2021,LuoSIMA,ForcellaLuoZhao,LuoCriticalScattering}.

The starting point of the semivirial approach is the observation that, on $\R^d\times\T$, the virial identity relevant to the dispersive directions involves only the $x$-gradient.  This leads to the semivirial functional
\[
        Q(u):=\|\nabla_xu\|_2^2-\frac dD\|u\|_{2^*}^{2^*}.
\]
For a prescribed mass $c>0$, we study the constrained minimization problem
\begin{equation}
\label{mc first def}
        m_c:=\inf\{E(u):u\in H^1(\R^d\times\T),\ M(u)=c,\ Q(u)=0\},
\end{equation}
where
\[
        M(u):=\|u\|_2^2,
        \qquad
        E(u):=\frac12\|\nabla_{x,y}u\|_2^2-\frac{D-2}{2D}\|u\|_{2^*}^{2^*}.
\]
The Euler--Lagrange equation associated with an optimizer of \eqref{mc first def} is precisely \eqref{nls} (in fact, it can be shown that $Q(u)$ is a natural constraint, see Lemma \ref{minimizer is solution}).  In the small-mass regime, the strict comparison
proved in \cite{Luo_energy_crit} forces these optimizers to depend nontrivially
on the periodic variable; hence they provide Dancer-type partially periodic
solutions. More precisely, we have the following $y$-dependence result from \cite{Luo_energy_crit}.

\begin{theorem}[$y$-dependence of the ground states, \cite{Luo_energy_crit}]\label{thm threshold mass}
There exists some $c_*\in(0,\infty)$ such that for any $c\in(0,c_*)$, any optimizer $u_c$ of $m_c$ must satisfy $\pt_y u_c\neq 0$.
\end{theorem}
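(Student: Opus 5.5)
The plan is to compare $m_c$ with the analogous quantity restricted to $y$-independent functions, and to show that for small mass the latter is strictly larger.

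\textbf{Reduction for $y$-independent functions.} If $u(x,y)=v(x)$, then
\[
\|u\|_2^2=2\pi\|v\|_2^2,\qquad \|\nabla u\|_2^2=2\pi\|\nabla v\|_2^2,\qquad \|u\|_{2^*}^{2^*}=2\pi\|v\|_{2^*}^{2^*}.
\]
Since $2^*=2+\frac4{d-1}$ exceeds the $L^2$-critical exponent $2+\frac4d$ on $\R^d$, the problem for $v$ is an $L^2$-supercritical, energy-subcritical NLS on $\R^d$. The constraint $Q(u)=0$ becomes the Pohozaev constraint $Q_{\R^d}(v)=0$ for that problem. The associated constrained infimum $m^{\R^d}_{c/2\pi}$ is then given explicitly by the Gagliardo--Nirenberg scaling. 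Scaling gives a formula of the form
\[
m_c^{1d}:=2\pi\, m^{\R^d}_{c/2\pi}=C\,c^{-\gamma},
\]
with
\[
\gamma=\frac{2(2^*-2)-\ldots}{\ldots}>0,
\]
which is positive precisely because of the mass-supercriticality. Hence $m_c^{1d}\to\infty$ as $c\to0$. This is the key asymptotic.

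\textbf{Uniform upper bound on $m_c$.} Next I show that $m_c$ stays bounded as $c\to0$. A natural bound is $m_c\le \frac1D S^{D/2}$, the Euclidean bubble energy on $\R^D$, or at least some constant independent of small $c$.

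To construct competitors, I take a bubble concentrated at a point of $\X$, cut off inside a small coordinate ball. Concentration makes its mass arbitrarily small while its gradient and $L^{2^*}$ norms stay near those of the bubble. I then correct the mass to exactly $c$ using a far-away, spread-out, low-energy bump; this adds mass but negligible gradient and $L^{2^*}$ contributions.

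Finally I restore $Q=0$ with the $x$-scaling
\[
u_\lambda(x,y)=\lambda^{d/2}u(\lambda x,y),
\]
which preserves mass and multiplies $\|\nabla_x u\|_2^2$ by $\lambda^2$ and $\|u\|_{2^*}^{2^*}$ by $\lambda^{d(2^*-2)/2}$. Since $d(2^*-2)/2=\frac{2d}{d-1}>2$, there is a unique $\lambda$ with $Q(u_\lambda)=0$. At that $\lambda$, the energy equals
\[
\max_\lambda\ \Big(\tfrac12\|\nabla_x u_\lambda\|_2^2-\tfrac{D-2}{2D}\|u_\lambda\|_{2^*}^{2^*}\Big)+\tfrac12\|\pt_y u\|_2^2,
\]
which is bounded uniformly in $c$ by the near-bubble norms.

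\textbf{Conclusion and main obstacle.} There is $c_*$ such that for $c<c_*$ we have $m_c\le C_0<m_c^{1d}$. If an optimizer $u_c$ had $\pt_y u_c=0$, it would be an admissible $y$-independent competitor, so $m_c=E(u_c)\ge m_c^{1d}$, a contradiction. Hence $\pt_y u_c\ne0$.

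The step I expect to be delicate is the uniform upper bound, specifically checking that the $x$-only scaling keeps the energy bounded. The $y$-gradient is not scaled, so it must be controlled a priori. The cut-off bubble gives $\|\pt_y u\|_2^2\lesssim\|\nabla u\|_2^2$, which is bounded, so this works. The maximized $x$-part is bounded by a constant times
\[
\big(\|\nabla_x u\|_2^2\big)^{\theta}\,\|u\|_{2^*}^{-2^*\theta'}
\]
for suitable exponents $\theta,\theta'$, and this is finite because the $L^{2^*}$ norm of the concentrated bubble stays bounded below. If this fails, I would simply take a fixed smooth profile $\phi$ and use the mass-preserving dilation in all variables only locally. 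In any case $\sup_{c<1}m_c<\infty$ is all that is needed.
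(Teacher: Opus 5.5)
Your argument is correct and follows the route behind the cited result. This paper does not reprove the statement; in \cite{Luo_energy_crit} it rests on the same comparison: $m_c$ is bounded uniformly by the bubble level $\mathcal S^{D/2}/D$ (\cite[Lem.~3.6]{Luo_energy_crit}, which your concentrated-bubble-plus-$x$-projection construction reproves; it also follows from Theorem~\ref{theorem strict bound} here), while the $y$-independent level $2\pi m^{\R^d}_{c/2\pi}$ blows up as $c\to0$. For completeness, the exponent you left unfinished is exactly $\gamma=1$: $2^*=2+\frac{4}{d-1}$ has critical regularity $s_c=\frac12$ on $\R^d$, so $m^{\R^d}_{c}=m^{\R^d}_1c^{-1}$, and $m^{\R^d}_1>0$ by Gagliardo--Nirenberg.
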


Theorem \ref{thm threshold mass} shows that any small-mass optimizer, if it exists, must depend nontrivially on the periodic variable.  However, the existence of such optimizers for $0<c<c_*$ was left open in \cite{Luo_energy_crit}.  The main purpose of the present paper is to resolve this open problem and to show that, in fact, optimizers exist for every prescribed mass.

Our main result is the following.

\begin{theorem}\label{main theorem}
For any mass $c\in (0,\infty)$ the variational problem $m_c$ defined by \eqref{mc first def} admits an optimizer. Moreover, $u_c$ may be chosen strictly positive and it solves \eqref{nls} for some $\beta=\beta_c>0$.
\end{theorem}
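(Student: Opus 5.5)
The plan is to run the concentration-compactness argument for $m_c$ from \cite{Luo_energy_crit}, which shows that a minimizing sequence is compact unless one of two defects occurs: vanishing, or bubbling, where the energy escapes into a concentrating Aubin--Talenti profile. The first case is excluded by the constraint. The second is excluded by proving a strict inequality below the Euclidean Sobolev threshold.

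\textbf{Step 1: compactness reduction.} Let $(u_n)$ be a minimizing sequence with $M(u_n)=c$ and $Q(u_n)=0$. On the constraint,
\[
E(u_n)=\tfrac12\|\pt_y u_n\|_2^2+\tfrac1{d-1}\|\nabla_x u_n\|_2^2 ,
\]
so $(u_n)$ is bounded in $H^1$. We may replace $u_n$ by $|u_n|$ and use Schwarz symmetrization in $x$, rescaled back to $Q=0$, so that the $u_n$ are nonnegative and radial in $x$. Vanishing is impossible because $Q(u_n)=0$ together with Sobolev gives $\|u_n\|_{2^*}\gtrsim 1$. Applying a profile decomposition in $H^1(\R^d\times\T)$ with the critical exponent, the only obstruction left is a concentrating bubble. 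That alternative costs energy at least $\frac1D S^{D/2}$, where $S$ is the sharp Sobolev constant on $\R^D$; this is the threshold appearing in \cite{Luo_energy_crit}. Hence compactness holds, and a minimizer exists, as soon as
\[
m_c<\tfrac1D S^{D/2}. \qquad (\ast)
\]
Dichotomy of mass is handled as usual by strict subadditivity, or monotonicity of $c\mapsto m_c$. By Lemma \ref{minimizer is solution} the minimizer solves \eqref{nls} with multiplier $\beta_c$. We get $\beta_c>0$ by testing the equation against $u$ and using $Q(u)=0$ together with the Pohozaev-type identity in $x$. Strict positivity follows from the strong maximum principle applied to $|u|$.

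\textbf{Step 2: the estimate $(\ast)$ (the main obstacle).} We need a test function showing $(\ast)$ with no lower-order focusing term available. Let $U_\varepsilon$ be the Aubin--Talenti bubble on $\R^D$ at scale $\varepsilon$, centered at the origin. Take a cutoff $\chi$ in $y$ supported in $(-\pi,\pi)$ that equals $1$ near $0$, and set $w_\varepsilon=\chi(y)U_\varepsilon$, viewed on $\R^d\times\T$. Then adjust $w_\varepsilon$ by the mass-preserving scaling $\lambda^{d/2}w(\lambda x,y)$, combined if needed with an amplitude and $x$-dilation, to land on $M=c$, $Q=0$.

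The expansion to carry out is this. Truncating in $y$ removes the gradient tail of $U_\varepsilon$ on $|y|\gtrsim 1$, which is of order $\varepsilon^{D-2}$. The critical norm loses only $O(\varepsilon^{D})$. The error from $\chi'$ is also of order $\varepsilon^{D-2}$, and this is where the computation is delicate. One must show that the net effect is a strict gain: the $\|\nabla U\|^2$ removed outside the period beats the $\chi'$ cross terms. The first attempt is to choose $\chi$ with a long plateau and a slow transition. Failing that, the fallback is to periodize in $y$ the dilated bubble, summing its translates over $2\pi\Z$, and compare interaction terms, where the sign is favourable since the interactions are attractive.

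Remaining ingredients are the mass correction and the $\beta$-direction. Because $D\ge3$, the mass of $U_\varepsilon$ on a bounded strip is $o(1)$ when $d\ge4$; for smaller $d$ it needs a logarithmic or explicit treatment. So the $x$-rescaling needed to reach $M=c$ uses large $\lambda$, and the resulting energy perturbation must be shown to be $o(\varepsilon^{D-2})$. Balancing these error orders, especially in low dimensions $d=2,3$, is where I expect the real difficulty. Once $(\ast)$ holds for every $c$, Step 1 gives the theorem.
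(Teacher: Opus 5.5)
Your Step 1 follows the paper's compactness scheme: reduce to $m_c<m_{\rm bub}$, with Euclidean profiles costing at least $m_{\rm bub}$. Step 2, however, has a genuine gap. Your main test function provably cannot give $(\ast)$, and the obstruction is structural, so no tuning of $\chi$ will fix it.

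If $\chi$ is supported in $(-\pi,\pi)$, then $w_\varepsilon=\chi(y)U_\varepsilon$, and every function you obtain from it by $x$-dilations and amplitude changes, lives in the single chart $\R^d\times(-\pi,\pi)$. Extended by zero, it is some $\phi\in\dot H^1(\R^D)$ with the same $A=\|\nabla_X\phi\|_2^2$, $B=\|\pt_Y\phi\|_2^2$ and $C=\|\phi\|_{2^*}^{2^*}$. Apply the sharp Sobolev inequality to $\phi(\alpha X,\beta Y)$ and optimize in $\alpha/\beta$; this gives $\mathcal S^DC^{D-2}\le D^Dd^{-d}A^dB$. If $Q\le0$, i.e. $A\le\frac dD C$, this yields $BC\ge\mathcal S^D/D$, hence $I(\phi)=\frac12B+\frac1{2D}C\ge\sqrt{BC/D}\ge m_{\rm bub}$. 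This is exactly the estimate the paper uses to rule out Euclidean bubbles, so no such trial function on $Q=0$ lies below $m_{\rm bub}$. Concretely, $\int|\nabla(\chi U_\varepsilon)|^2=\int\chi^2U_\varepsilon^{2^*}+\int|\chi'|^2U_\varepsilon^2$, so the $\chi'$-term always outweighs the removed tail.

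The missing idea is to use the periodic direction without cutting off in $y$ at all. Since the bubble is even in $Y$, the rescaled bubble restricted to exactly one period ($|Y|<R$ in rescaled variables) wraps around $\T$ continuously. The resulting function is \emph{not} a Euclidean function. Its gradient energy is $K-a_R-b_R$ with $a_R+b_R\sim R^{2-D}$, while the $L^{2^*}$ loss is only $O(R^{-D})$, and no cutoff derivative is paid in $y$. The paper cuts off only in $X$, at scale $R^2$, with error $O(R^{3-2D})$.

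Your periodization fallback points in this direction, but as stated it is incomplete:
\begin{itemize}
\item the sum over $2\pi\Z$ diverges for $D=3$, where the bubble decays like $|Z|^{-1}$;
\item it needs an $x$-cutoff to be in $L^2$ in low dimensions;
\item the anisotropic computation is not done, namely the separate $\nabla_x$ and $\pt_y$ interaction terms and the projection to $Q=0$.
\end{itemize}
So $(\ast)$ is not established.

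Two further steps also fail as written.

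First, you cannot reach $M=c$ by rescaling at negligible cost. The scaling $\lambda^{d/2}w(\lambda x,y)$ preserves mass. The $Q$-preserving mass-changing scaling $T_\lambda u=\lambda^{(d-1)/2}u(\lambda x,y)$ gives $M(T_\lambda u)=\lambda^{-1}M(u)$ and $E(T_\lambda u)=\frac\lambda{2D}\|u\|_{2^*}^{2^*}+\frac1{2\lambda}\|\pt_yu\|_2^2$, which blows up as $\lambda=\mu/c\to0$. The paper instead keeps the trial function's small mass $\mu_R\to0$ and concludes by monotonicity: $m_c\le m_{\mu_R}<m_{\rm bub}$ for $c>\mu_R$.

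Second, your argument for $\beta_c>0$ does not work. Nehari plus the $x$-Pohozaev identity is just $Q(u)=0$, and it only gives $\|\pt_yu\|_2^2+\beta M(u)=\frac1D\|u\|_{2^*}^{2^*}$, with no sign for $\beta$. The paper argues in three steps:
\begin{itemize}
\item it gets $\beta\ge0$ from minimality and monotonicity along $T_\lambda u$ at $\lambda=1^+$;
\item it excludes $\beta=0$ by the Caffarelli--Gidas--Spruck classification, since a $y$-periodic positive solution of $-\Delta u=u^{2^*-1}$ on $\R^D$ would have to be a bubble;
\item it then uses $\beta>0$ to exclude mass loss $c_1<c$, which non-strict monotonicity alone cannot do.
\end{itemize}
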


The proof of Theorem~\ref{main theorem} is based on a strict sub-bubbling estimate.  Let $\mathcal S$ denote the sharp Sobolev constant on $\R^D$, namely
\[
        \mathcal S
        :=\inf_{0\ne f\in \dot H^1(\R^D)}
        \frac{\|\nabla f\|_{L^2(\R^D)}^2}
        {\|f\|_{L^{2^*}(\R^D)}^2}.
\]
The Euclidean Aubin--Talenti bubble energy is
\[
        m_{\rm bub}:=\frac{\mathcal S^{D/2}}{D}.
\]
The key theorem is the strict inequality below.

\begin{theorem}\label{theorem strict bound}
For any $c\in(0,\infty)$ we have $m_c<m_{\rm bub}$.
\end{theorem}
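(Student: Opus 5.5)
The plan is to bound $m_c$ from above by a test function made of a Euclidean bubble concentrated at a point of $\mathbb X$ and truncated to one period in $y$, plus a widely spread piece that carries almost all of the mass. The first step is a reduction. For $u\in H^1(\mathbb X)$ with $M(u)=c$, use the mass-preserving dispersive scaling $u^t(x,y):=t^{d/2}u(tx,y)$. Then
\[
E(u^t)=\tfrac{t^2}{2}\|\nabla_xu\|_2^2+\tfrac12\|\pt_yu\|_2^2-\tfrac{D-2}{2D}t^{\frac{d(2^*-2)}{2}}\|u\|_{2^*}^{2^*},
\]
where $\frac{d(2^*-2)}{2}=\frac{2d}{d-1}>2$. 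Hence $t\mapsto E(u^t)$ has a unique critical point $t_*$, which is its maximum, and $Q(u^{t_*})=0$. This gives
\[
m_c\le \max_{t>0}E(u^t).
\]
Note also that on $\{Q=0\}$ one has $E=\frac1{2d}\|\nabla_xu\|_2^2+\frac12\|\pt_yu\|_2^2$, since $d-D+2=1$.

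\textbf{Bubble piece.} Let $U_\varepsilon(z)=\varepsilon^{-(D-2)/2}U(z/\varepsilon)$, $z=(x,y)$, be the Aubin--Talenti bubble centred at the origin. Restrict it to the slab $|y|\le\pi$ and extend $2\pi$-periodically in $y$. Because $U_\varepsilon$ is even in $y$, this gives a Lipschitz function on $\mathbb X$. Then multiply by a cutoff $\chi(x/R)$ with $R$ large and fixed, which makes it $L^2$ in $x$ for every $d\ge2$. Call the result $w_\varepsilon$. Its gradient is
\[
\|\nabla w_\varepsilon\|_2^2=\mathcal S^{D/2}-\int_{|y|>\pi}|\nabla U_\varepsilon|^2+O\big(\varepsilon^{D-2}R^{-(D-2)}\big)=\mathcal S^{D/2}-\kappa\,\varepsilon^{D-2}+O\big(\varepsilon^{D-2}R^{2-D}\big)+o(\varepsilon^{D-2}),
\]
with $\kappa>0$ explicit, because $|\nabla U_\varepsilon|^2\sim\varepsilon^{D-2}|z|^{-2(D-1)}$ away from the origin. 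Its nonlinear term is
\[
\|w_\varepsilon\|_{2^*}^{2^*}=\mathcal S^{D/2}-O(\varepsilon^{D}),
\]
since the nonlinear tail beyond $|y|=\pi$ is of order $\varepsilon^D$. I will also track the split into $x$- and $y$-gradients separately. By radial symmetry the full-space bubble splits as $\frac dD\mathcal S^{D/2}$ and $\frac1D\mathcal S^{D/2}$, and each part loses a positive multiple of $\varepsilon^{D-2}$.

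\textbf{Mass piece.} Choose $v_L(x)=L^{-d/2}\phi(x/L)$, independent of $y$, with $\phi$ supported far from the support of $w_\varepsilon$, normalised so that $M(w_\varepsilon+v_L)=c$. Then
\[
\|\nabla_xv_L\|_2^2\lesssim L^{-2},\qquad \|v_L\|_{2^*}^{2^*}\lesssim L^{-d(2^*-2)/2},
\]
and there are no cross terms because the supports are disjoint. Taking $L=L(\varepsilon)$ large, both errors are $o(\varepsilon^{D-2})$. The scaling $u^t$ keeps the supports disjoint provided $t$ stays in a compact set, which is the case near the maximiser.

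\textbf{Conclusion.} Set $u=w_\varepsilon+v_L$ and $f(t):=E(u^t)=\frac{t^2}{2}a+\frac b2-\frac{D-2}{2D}t^{2d/(d-1)}N$. At $\varepsilon=0$ we have $(a,b,N)=(\frac dD\mathcal S^{D/2},\frac1D\mathcal S^{D/2},\mathcal S^{D/2})$; then $t=1$ is the maximiser and $\max f=m_{\rm bub}$. By the envelope theorem, perturbing $a$ and $b$ downward by $\gtrsim\varepsilon^{D-2}$ and $N$ by $O(\varepsilon^D)$ changes the maximum by
\[
\tfrac12(\delta a+\delta b)+O(\varepsilon^D)\le -\kappa'\varepsilon^{D-2}+O\big(\varepsilon^{D-2}R^{2-D}\big)+o(\varepsilon^{D-2}).
\]
Fixing $R$ large and then $\varepsilon$ small gives $m_c<m_{\rm bub}$.

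\textbf{Main obstacle.} The delicate step is the gradient expansion. The truncation in $y$ must produce a genuine loss of exact order $\varepsilon^{D-2}$, not merely $O(\varepsilon^{D-2})$ of uncertain sign. Every other error (the $x$-cutoff at radius $R$, the kinks from periodic extension, the mass piece) must be shown to be either smaller or controllable by a parameter independent of $\varepsilon$. I would first compute the tail integrals $\int_{|y|>\pi}|\nabla U_\varepsilon|^2$ explicitly. Then I would check that the $x$-cutoff error carries the factor $R^{2-D}$, which is where the low dimensions $d=2,3$ need care.
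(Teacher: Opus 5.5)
Your proposal is correct. Its core mechanism is the paper's: you truncate a concentrated Aubin--Talenti bubble to one period in $y$, so the gradient loses exactly order $\varepsilon^{D-2}$ while the nonlinear term loses only $O(\varepsilon^{D})$ (the paper's $R$ corresponds to your $\pi/\varepsilon$). You then apply a mass-preserving $x$-dilation onto $\{Q=0\}$.

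\textbf{Main difference: the mass constraint.} The paper lets the trial mass $\mu_R=M(U_R)$ tend to zero and gets $m_{\mu_R}<m_{\rm bub}$. It then invokes the monotonicity of $c\mapsto m_c$ (Lemma~\ref{lem:monotonicity}, imported from earlier work) to reach arbitrary $c$. You instead add a distant, $y$-independent, diffuse piece $v_L$ carrying the missing mass $c-M(w_\varepsilon)$ at cost $O(L^{-2})$. This lands you directly in $S(c)$, so you never need monotonicity; in effect you build it into the test function. Your route is more self-contained. The paper's is shorter once the lemma is available and avoids the disjoint-support bookkeeping.

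\textbf{Secondary differences.} Your bound $m_c\le\max_t E(u^t)$ plus the envelope expansion is the same computation as the paper's explicit $t_R$, where $1-t_R=O(R^{2-D})$ and the energy shift is $O((1-t_R)^2)$. Your version does not require checking $Q<0$ first. Your fixed $x$-cutoff radius (two parameters: $R$ fixed, then $\varepsilon\to0$) replaces the paper's single-parameter cutoff at $|X|\sim R^2$ in rescaled variables.

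\textbf{Points to record when writing it out.}
\begin{itemize}
\item You need $M(w_\varepsilon)<c$. This holds because $M(w_\varepsilon)\to0$ as $\varepsilon\to0$ for fixed $R$: it is $O(\varepsilon R)$ for $D=3$ and $O(\varepsilon^2|\log\varepsilon|)$ for $D=4$.
\item The cutoff error is in fact $O(\varepsilon^{D-2}R^{1-D})$, slightly better than you claimed. Its constant is uniform in $\varepsilon$ and $R$, since $|\nabla U_\varepsilon(z)|^2\lesssim\varepsilon^{D-2}|z|^{2-2D}$ everywhere. This uniformity is what justifies fixing $R$ first and then sending $\varepsilon\to0$.
\item The envelope expansion also carries a quadratic remainder $O(\varepsilon^{2(D-2)})$. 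It is harmless, being $o(\varepsilon^{D-2})$.
\item $x$-dilations preserve disjointness of supports for every $t>0$, so no restriction on $t$ is needed.
\end{itemize}
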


Theorem~\ref{theorem strict bound} is the main new variational input.  It should be compared with the non-strict estimate
\[
        m_c\le \frac{\mathcal S^{D/2}}{D}
\]
proved in \cite[Lem. 3.6]{Luo_energy_crit}.  In a standard Brezis--Nirenberg type argument, a strict inequality below the Euclidean bubble level is often obtained from a lower-order focusing perturbation \cite{brezis_nirenberg}.  Such a mechanism is absent here: the problem contains only the critical nonlinearity, and there is no additional lower-order nonlinear term whose sign can lower the energy.  

The key observation of this paper is that the missing Brezis--Nirenberg input can be replaced by a purely geometric mechanism.  The strict energy decrease is
produced by the bounded periodic direction itself. More precisely, we place a rescaled Aubin--Talenti bubble in one fundamental period of the waveguide.  In rescaled coordinates, this amounts to keeping the Euclidean bubble in a slab $\R^d\times(-R,R)$ and discarding the tails $|Y|>R$.  The discarded tail contributes to the gradient energy at order $R^{2-D}$, whereas the nonlinear tail contributes only at order $R^{-D}$.  Consequently, after a careful projection back to the semivirial constraint $Q=0$, the energy remains strictly below $m_{\rm bub}$.  This is the bounded-direction analogue of the Brezis--Nirenberg test-function mechanism, but the sign of the correction comes from the loss of Euclidean tail energy caused by the compact direction.

Once Theorem~\ref{theorem strict bound} is available, the proof of compactness follows the usual critical strategy of concentration compactness.  A minimizing sequence for $m_c$ can lose compactness either by a scale-one translation along $\R^d$ or by forming a Euclidean critical bubble.  The first alternative gives a nonzero weak limit and hence compactness.  The second alternative carries at least the Euclidean bubble energy $m_{\rm bub}$; the strict estimate of Theorem~\ref{theorem strict bound} rules it out.  The remaining Brezis--Lieb splitting and Lagrange multiplier argument then yield an optimizer of $m_c$.  In this way, the semivirial-vanishing geometry gives an alternative energetic construction of Dancer-type solutions to the Lane--Emden equation.

Combining the present result with the intercritical theory developed in \cite{Luo_inter} and the energy-critical framework of \cite{Luo_energy_crit}, one obtains a unified variational approach to Dancer-type partially periodic solutions from the mass-supercritical regime up to the energy-critical endpoint.  The contribution of the present paper is precisely to close the endpoint compactness gap left open in \cite{Luo_energy_crit}.

The paper is organized as follows.  Section~\ref{sec:notation} fixes the notation and definitions that will be used throughout the paper.  Section~\ref{sec useful lemma} recalls the elementary properties of the semivirial constraint.  Section~\ref{sec:profile} states the static critical profile decomposition on $\R^d\times\T$ in the form used later.  Section~3 proves the strict sub-bubbling bound, Theorem~\ref{theorem strict bound}.  Section~4 proves the compactness of minimizing sequences and completes the proof of Theorem~\ref{main theorem}.

\section{Preliminaries}

\subsection{Notation and definitions}\label{sec:notation}
We use the notation $A\lesssim B$ if there exists a constant $C>0$ such that $A\le CB$.  Similarly, $A\gtrsim B$ means $B\lesssim A$, and $A\sim B$ means both $A\lesssim B$ and $B\lesssim A$. 

We write
\[
        \mathbb X:=\R^d_x\times\T_y,
        \qquad
        D:=d+1,
        \qquad
        2^*:=\frac{2D}{D-2}=\frac{2(d+1)}{d-1}.
\]
For $q\in(0,\infty)$, the norm $\|\cdot\|_q$ will always denote the $L^q(\mathbb X)$ norm, unless the underlying domain is explicitly indicated.  We also use $H^1_{x,y}:=H^1(\mathbb X)$ and $H^1_x:=H^1(\R^d)$.  For functions on $\R^{D}$, the full gradient is denoted by $\nabla_{\R^D}$ or simply $\nabla$ when no confusion is possible.

In the following we shall record the semivirial quantities and scaling identities used in \cite{Luo_energy_crit}. For $u\in H^1_{x,y}$, define the energies on a waveguide manifold:
\begin{align*}
        M(u)&:=\|u\|_2^2,
        \\
        E(u)&:=\frac12\|\nabla_{x,y}u\|_2^2-\frac{D-2}{2D}\|u\|_{2^*}^{2^*},
        \\
        Q(u)&:=\|\nabla_xu\|_2^2-\frac dD\|u\|_{2^*}^{2^*},
        \\
        I(u)&:=E(u)-\frac12Q(u)
        =\frac12\|\partial_yu\|_2^2+\frac1{2D}\|u\|_{2^*}^{2^*}.
\end{align*}
For $c>0$ set
\begin{align*}
        S(c):=\{u\in H^1_{x,y}:M(u)=c\}
        ,\quad
        V(c):=\{u\in S(c):Q(u)=0\}.
\end{align*}
The variational problem $m_c$ is then defined by
\begin{align*}
        m_c&:=\inf\{E(u):u\in V(c)\}.
\end{align*}
For $u\in H^1_{x,y}$ and $t>0$ define the mass-preserving $x$-scaling
\begin{equation*}
        u_t(x,y):=t^{d/2}u(tx,y).
\end{equation*}
A direct computation gives
\begin{align*}
        M(u_t)&=M(u),
        \\
        \|\nabla_xu_t\|_2^2&=t^2\|\nabla_xu\|_2^2,
        \\
        \|\partial_yu_t\|_2^2&=\|\partial_yu\|_2^2,
        \\
        \|u_t\|_{2^*}^{2^*}&=t^\sigma\|u\|_{2^*}^{2^*},
\end{align*}
with $ \sigma:=d\left(\frac{2^*}{2}-1\right)=\frac{2d}{d-1}>2$. Consequently,
\begin{align}
        Q(u_t)&=t^2\|\nabla_xu\|_2^2-\frac dD t^\sigma\|u\|_{2^*}^{2^*},\notag
        \\
        \frac{d}{dt}E(u_t)&=t^{-1}Q(u_t).
        \label{scale:Ederivative}
\end{align}

\subsection{Some useful lemmas}\label{sec useful lemma}

We record several elementary consequences that will be used repeatedly. For their proofs, we refer to \cite{Luo_inter}.

\begin{lemma}[Projection onto $Q=0$]\label{lem:projection}
Let $u\in H^1_{x,y}$ be nonzero.
\begin{enumerate}
\item If $Q(u)<0$, then there exists a unique $t_u\in(0,1)$ such that $Q(u_{t_u})=0$.
\item If $Q(u)=0$, then $E(u)=I(u)$.
\item If $Q(u)<0$ and $t_u$ is as in (1), then
\[
        I(u_{t_u})<I(u).
\]
\end{enumerate}
\end{lemma}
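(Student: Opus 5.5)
The plan is to study the scalar function $f(t):=Q(u_t)=at^2-bt^\sigma$, with $a:=\|\nabla_xu\|_2^2$ and $b:=\frac dD\|u\|_{2^*}^{2^*}$. We then read off all three items from the scaling identities above and from \eqref{scale:Ederivative}.

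\textbf{Item (1).} Since $u\neq0$, we have $b>0$, and $a>0$ as well. Indeed, $a=0$ would force $u$ to be independent of $x$, which is impossible for a nonzero function in $L^2(\R^d\times\T)$. We write $f(t)=t^2(a-bt^{\sigma-2})$. Because $\sigma>2$, the bracket $a-bt^{\sigma-2}$ is strictly decreasing in $t$, tends to $a>0$ as $t\to0^+$, and has a unique zero at $t_*=(a/b)^{1/(\sigma-2)}$. The hypothesis $Q(u)<0$ says that $f(1)<0$, that is, $a<b$, hence $t_*<1$. We set $t_u:=t_*\in(0,1)$. Uniqueness on all of $(0,\infty)$ follows from the strict monotonicity of the bracket.

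\textbf{Item (2).} This is immediate from the definition $I=E-\frac12Q$.

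\textbf{Item (3).} Using the scaling identities, we compute
\[
I(u_t)=\frac12\|\partial_yu\|_2^2+\frac{1}{2D}t^\sigma\|u\|_{2^*}^{2^*}.
\]
The first term does not depend on $t$. The second is strictly increasing in $t$ because $\|u\|_{2^*}^{2^*}>0$ and $\sigma>0$. Since $t_u<1$, we conclude $I(u_{t_u})<I(u_1)=I(u)$.

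The proof is a one-variable calculus exercise, and I expect no genuine obstacle. The only point that needs care is justifying $a>0$ in item (1), which is required for the root to exist and lie in $(0,1)$. Two remarks complement this. On the set $\{t<t_u\}$ we have $Q(u_t)>0$, and on $\{t>t_u\}$ we have $Q(u_t)<0$. By \eqref{scale:Ederivative} this means $t_u$ is the unique maximum point of $t\mapsto E(u_t)$, although this fact is not needed for the statement.
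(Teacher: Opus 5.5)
Your proof is correct and is the standard one-variable scaling argument the paper relies on. The paper defers this lemma to its earlier intercritical work, and it uses your explicit root $t=(a/b)^{1/(\sigma-2)}$ when projecting $U_R$ in the proof of Theorem~\ref{theorem strict bound}; you also rightly checked $\|\nabla_xu\|_2>0$, the one point needed for the root to be positive.
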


\begin{lemma}[An equivalent useful characterization of $m_c$]\label{lem:lecoz}
For every $c>0$,
\begin{equation*}
        m_c=\tilde m_c:=\inf\{I(u):u\in S(c),\ Q(u)\le0\}.
\end{equation*}
\end{lemma}

\begin{lemma}[Monotonicity of $c\mapsto m_c$]\label{lem:monotonicity}
The mapping $c\mapsto m_c$ is nonincreasing on $(0,\infty)$.
\end{lemma}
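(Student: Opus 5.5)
We argue through the characterization of Lemma~\ref{lem:lecoz}: it suffices to show $\tilde m_{c_2}\le \tilde m_{c_1}$ whenever $0<c_1<c_2$. Fix $\varepsilon>0$ and pick $u\in S(c_1)$ with $Q(u)\le 0$ and $I(u)\le m_{c_1}+\varepsilon$. The idea is to add the missing mass $c_2-c_1$ far away in the $x$-direction, as a very spread-out, $y$-independent profile. Such a profile carries almost no energy, and we first arrange enough slack in $u$ so that it cannot violate the constraint $Q\le0$.

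\emph{Step 1 (strict negativity).} If $Q(u)=0$, we replace $u$ by $u_t$ with $t>1$ close to $1$. Since $Q(u)=0$ means $\|\nabla_x u\|_2^2=\frac dD\|u\|_{2^*}^{2^*}$, we get
\[
Q(u_t)=\tfrac dD\|u\|_{2^*}^{2^*}\,(t^2-t^\sigma)<0 \quad\text{because } \sigma>2 .
\]
Moreover $I(u_t)=\frac12\|\partial_y u\|_2^2+\frac{t^\sigma}{2D}\|u\|_{2^*}^{2^*}$, which tends to $I(u)$ as $t\downarrow1$, and the mass is unchanged. So we may assume $Q(u)<0$.

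\emph{Step 2 (localization).} By density we approximate $u$ in $H^1_{x,y}$ by functions compactly supported in $x$. We renormalize the mass by a constant factor close to $1$. Since $M$, $Q$ and $I$ are continuous on $H^1_{x,y}$ (the $L^{2^*}$ term is controlled by Sobolev on $\mathbb X$), we keep $Q(u)<0$ strictly, $M(u)=c_1$, and $I(u)\le m_{c_1}+2\varepsilon$, now with $\operatorname{supp} u\subset\{|x|\le R\}$.

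\emph{Step 3 (adding mass far away).} Take $\varphi\in C_c^\infty(\R^d)$ with $\|\varphi\|_{L^2(\mathbb X)}^2=c_2-c_1$, viewed as a function constant in $y$. For $\lambda\in(0,1)$ set $w_\lambda(x,y)=\lambda^{d/2}\varphi(\lambda(x-x_\lambda))$, with $x_\lambda$ chosen so that the supports of $u$ and $w_\lambda$ are disjoint. Then
\[
M(w_\lambda)=c_2-c_1,\qquad \partial_y w_\lambda=0,\qquad \|\nabla_x w_\lambda\|_2^2=\lambda^2\|\nabla_x\varphi\|_2^2,\qquad \|w_\lambda\|_{2^*}^{2^*}=\lambda^\sigma\|\varphi\|_{2^*}^{2^*}.
\]
Because the supports are disjoint, $M$, $Q$ and $I$ are all additive on $v:=u+w_\lambda$. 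Hence $M(v)=c_2$, and
\[
Q(v)=Q(u)+Q(w_\lambda)\le Q(u)+\lambda^2\|\nabla_x\varphi\|_2^2<0
\]
for $\lambda$ small. Also $I(v)=I(u)+\frac{\lambda^\sigma}{2D}\|\varphi\|_{2^*}^{2^*}\le m_{c_1}+3\varepsilon$ for $\lambda$ small. Thus $\tilde m_{c_2}\le m_{c_1}+3\varepsilon$, and letting $\varepsilon\to0$ gives $m_{c_2}\le m_{c_1}$.

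\emph{Main obstacle.} The only delicate point is that the added piece may have $Q(w_\lambda)>0$. This is why Step~1 creates strict negativity $Q(u)<0$ before adding mass, and why the perturbation is scaled so that $Q(w_\lambda)=O(\lambda^2)$ falls below $|Q(u)|$. The localization in Step~2 must preserve this strict sign, which continuity guarantees.
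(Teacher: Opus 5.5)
Your proof is correct. Step~1 creates the slack $Q(u)<0$, and Step~2 keeps it by continuity of $M,Q,I$ on $H^1_{x,y}$. The disjointly supported, $y$-independent bump $w_\lambda$ then supplies the missing mass $c_2-c_1$ at a cost of only $O(\lambda^2)$ in $Q$ and $O(\lambda^\sigma)$ in $I$, which gives $\tilde m_{c_2}\le m_{c_1}+3\varepsilon$.

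The paper itself gives no proof of this lemma and only refers to \cite{Luo_inter}, so I cannot compare your argument line by line with the paper's. Truncating a near-optimizer and adding the missing mass as a far-away, widely spread bump is the standard route to such monotonicity statements. Working with $\tilde m_c$ from Lemma~\ref{lem:lecoz} spares you from reprojecting $u+w_\lambda$ onto $Q=0$. There is no circularity in using that lemma, since its standard proof needs only the projection lemma and not monotonicity.
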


\begin{lemma}[Scale-invariant Gagliardo-Nirenberg inequality on $\R^d\times\T$]\label{lemma gn additive}
There exists some $C>0$ such that for all $u\in H_{x,y}^1$ we have
\begin{align*}
\|u\|_{2^*}^{2^*}
\leq C\|\nabla_x u\|_2^{\frac{2d}{d-1}}
\left(\|u\|_{2}^{\frac{2}{d-1}}+\|\pt_y u\|_{2}^{\frac{2}{d-1}}\right).
\end{align*}
\end{lemma}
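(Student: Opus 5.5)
The statement just above is Lemma~\ref{lemma gn additive}, the scale-invariant Gagliardo--Nirenberg inequality on $\R^d\times\T$. The plan is to split $u$ into its $y$-average and its oscillating part, and treat each piece by a different Sobolev-type argument. Write
\[
u=P_0u+P_{\neq}u,\qquad P_0u(x):=\frac1{2\pi}\int_\T u(x,y)\,dy .
\]
Since $\|u\|_{2^*}^{2^*}\lesssim \|P_0u\|_{2^*}^{2^*}+\|P_{\neq}u\|_{2^*}^{2^*}$, it suffices to bound each piece by the right-hand side. Both projections commute with $\nabla_x$ and contract $L^2$ norms.

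\emph{Zero mode.} $P_0u$ is a function of $x$ alone, so $\|P_0u\|_{L^{2^*}(\mathbb X)}^{2^*}=2\pi\|P_0u\|_{L^{2^*}(\R^d)}^{2^*}$. The exponent $2^*=2+\frac4{d-1}$ lies strictly between $2$ and the $d$-dimensional Sobolev exponent, so the Euclidean Gagliardo--Nirenberg inequality on $\R^d$ applies with $\theta=d(\frac12-\frac1{2^*})$, giving $2^*\theta=\sigma=\frac{2d}{d-1}$ and $2^*(1-\theta)=\frac2{d-1}$. Hence
\[
\|P_0u\|_{2^*}^{2^*}\lesssim \|\nabla_xP_0u\|_{L^2(\R^d)}^{\frac{2d}{d-1}}\|P_0u\|_{L^2(\R^d)}^{\frac2{d-1}}\lesssim \|\nabla_xu\|_2^{\frac{2d}{d-1}}\|u\|_2^{\frac2{d-1}} .
\]

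\emph{Nonzero modes.} Set $v=P_{\neq}u$. Since $v$ has mean zero in $y$, Poincar\'e's inequality in $y$ gives $\|v\|_2\le\|\pt_yv\|_2\le\|\pt_yu\|_2$. The aim is the anisotropic bound
\[
\|v\|_{2^*}^{2^*}\lesssim\|\nabla_xv\|_2^{\frac{2d}{d-1}}\|\pt_yv\|_2^{\frac2{d-1}} .
\]
The exponents are forced by the two scalings $x\mapsto\lambda x$ and $y\mapsto\mu y$: formally on $\R^{d+1}$, both sides scale as $\lambda^{-d}\mu^{-1}$ up to the same factor. The route is as follows. Extend $v$ periodically in $y$, multiply by a cutoff $\chi(y)$ supported in a few periods, and apply the anisotropic Sobolev inequality on $\R^D$,
\[
\|f\|_{L^{2^*}(\R^D)}\lesssim\prod_{i}\|\pt_if\|_{L^2(\R^D)}^{1/D},
\]
which holds because each $\prod_i\|\pt_if\|_2^{1/D}$ is invariant under coordinate-wise rescalings, so one can optimize over the rescaling $y\mapsto\mu y$ in the isotropic Sobolev inequality. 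This yields $\|v\|_{2^*}^{2}\lesssim\|\nabla_x v\|_2^{2d/D}\big(\|\pt_yv\|_2+\|v\|_2\big)^{2/D}$, where the extra $\|v\|_2$ comes from the derivative falling on the cutoff and is absorbed via Poincar\'e. Raising to the power $2^*/2$ with $\frac{2^*d}{D}=\frac{2d}{d-1}$ and $\frac{2^*}{D}=\frac2{d-1}$ gives exactly the claimed bound.

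Adding the two pieces gives the lemma. The main obstacle is the nonzero-mode estimate. Its difficulty is to obtain the power $\|\pt_yu\|_2^{2/(d-1)}$ rather than an inhomogeneous $H^1_y$ norm, and this is precisely where the vanishing $y$-mean and Poincar\'e's inequality are used. If the cutoff argument proves awkward, the fallback is a Littlewood--Paley argument in $y$-frequency, summing Bernstein estimates for modes $|k|\ge1$.
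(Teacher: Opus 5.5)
Your argument is correct. The paper itself gives no proof of this lemma (it is quoted from \cite{Luo_inter}), so there is no in-paper route to compare against. Your exponents check out: $\theta=\frac d{d+1}$ in the zero-mode Gagliardo--Nirenberg step, and $2^*<\frac{2d}{d-2}$ when $d\ge3$. Your anisotropic Sobolev step is the same $Y$-rescaling the paper uses in \eqref{eq:anisotropic-sob}.

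The one thing to revise is your diagnosis of where the difficulty lies. The right-hand side of the lemma is inhomogeneous in $y$, since $\|u\|_2^{\frac2{d-1}}+\|\pt_yu\|_2^{\frac2{d-1}}\sim(\|u\|_2+\|\pt_yu\|_2)^{\frac2{d-1}}$. So you can run the cutoff argument directly on $u$. This gives $\|u\|_{2^*}^2\lesssim\|\nabla_xu\|_2^{2d/D}(\|u\|_2+\|\pt_yu\|_2)^{2/D}$, and raising to the power $2^*/2$ finishes the proof. The splitting $u=P_0u+P_{\neq}u$ and Poincar\'e are then unnecessary.

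An equally short alternative is to apply the inhomogeneous embedding $H^1(\mathbb X)\hookrightarrow L^{2^*}(\mathbb X)$ to $u(\lambda x,y)$ and optimize in $\lambda$. Taking $\lambda^2=(\|u\|_2^2+\|\pt_yu\|_2^2)/\|\nabla_xu\|_2^2$ produces the $x$-scale-invariant form directly.

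What your mode splitting does buy is a sharper inequality, with $\|P_0u\|_2$ in place of $\|u\|_2$. It shows the mass term is needed only to control the $y$-average. That refinement is harmless but is not used in the paper: in Lemma~\ref{lem:nonvanishing}, only the boundedness of the bracket matters.
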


\begin{lemma}[Characterization of a minimizer as a standing wave solution]\label{minimizer is solution}
For any $c\in(0,\infty)$ an optimizer $u$ of $m_c$ is a solution of
\begin{align*}
-\Delta_{x,y}u+\beta u=|u|^{\frac{4}{d-1}}u
\end{align*}
with some $\beta\in\R$.
\end{lemma}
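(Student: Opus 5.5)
The plan is to apply the Lagrange multiplier rule with the two constraints $M(u)=c$ and $Q(u)=0$. I will then show that the multiplier attached to $Q$ vanishes, which is the sense in which $Q=0$ is a natural constraint. The tool throughout is the mass-preserving $x$-scaling $u_t(x,y)=t^{d/2}u(tx,y)$.

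First, $E$, $M$ and $Q$ are $C^1$ on $H^1_{x,y}$, since $2^*$ is the critical Sobolev exponent on the $D$-dimensional manifold $\mathbb X$. Their derivatives act as follows:
\begin{align*}
E'(u)&=-\Delta_{x,y}u-|u|^{\frac4{d-1}}u, \\
M'(u)&=2u, \\
Q'(u)&=-2\Delta_xu-\tfrac{d}{D}2^*|u|^{\frac4{d-1}}u.
\end{align*}
Here I used $\frac{D-2}{2D}\cdot 2^*=1$. Set $w:=\frac{d}{dt}u_t\big|_{t=1}=\frac d2u+x\cdot\nabla_xu$. Since $M(u_t)\equiv c$, we get $\langle M'(u),w\rangle=0$. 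By \eqref{scale:Ederivative}, $\langle E'(u),w\rangle=Q(u)$. Differentiating $Q(u_t)$ at $t=1$ and using $Q(u)=0$ gives
\[
\langle Q'(u),w\rangle=2\|\nabla_xu\|_2^2-\sigma\tfrac dD\|u\|_{2^*}^{2^*}=(2-\sigma)\|\nabla_xu\|_2^2<0,
\]
because $\sigma>2$. The right-hand side is strictly negative since $\nabla_xu\neq0$: a nonzero $H^1$ function on $\mathbb X$ cannot be independent of $x$.

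Second, $M'(u)$ and $Q'(u)$ are linearly independent. If $Q'(u)=\kappa M'(u)$ for some $\kappa$, pairing with $w$ would give $(2-\sigma)\|\nabla_xu\|_2^2=\kappa\langle M'(u),w\rangle=0$, which is a contradiction. The Lagrange multiplier theorem on the $C^1$ submanifold $V(c)$ therefore yields $\lambda,\mu\in\R$ with
\[
E'(u)=\lambda M'(u)+\mu Q'(u).
\]
Pairing with $w$ gives $0=Q(u)=\mu(2-\sigma)\|\nabla_xu\|_2^2$, hence $\mu=0$. The equation then becomes $-\Delta_{x,y}u-|u|^{\frac4{d-1}}u=2\lambda u$, which is the claim with $\beta=-2\lambda$.

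The main technical point is justifying the pairing with $w$, since $x\cdot\nabla_xu$ need not belong to $H^1$. I would avoid pairing directly. For each $\Phi\in\{E,M,Q\}$, the quantity $\langle\Phi'(u),w\rangle$ is by definition $\frac{d}{dt}\Phi(u_t)|_{t=1}$, and these derivatives are computed explicitly from the scaling formulas. The multiplier identity is first tested against smooth compactly supported functions. It then extends to the curve $t\mapsto u_t$ by a difference quotient argument: $(u_t-u)/(t-1)$ is bounded in $\dot H^{-1}$-type pairings against the equation. Alternatively, one uses elliptic regularity together with a cutoff $x\cdot\nabla_xu\,\chi(x/R)$ and lets $R\to\infty$, as in the standard Pohozaev argument.
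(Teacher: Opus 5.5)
The paper gives no proof of this lemma (it defers to \cite{Luo_inter}), so I judge your argument on its own. Your formal computation is correct and follows the standard strategy: pairing $E'(u)=\lambda M'(u)+\mu Q'(u)$ with $w=\frac d2u+x\cdot\nabla_xu$ gives $0=Q(u)=\mu(2-\sigma)\|\nabla_xu\|_2^2$. The gap is that you use this pairing twice, for the independence of $M'(u),Q'(u)$ and for $\mu=0$, without ever justifying it, and neither of your remedies closes it.

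For general $u\in H^1(\mathbb X)$ the orbit $t\mapsto u_t$ need not be differentiable, or even Lipschitz, into $H^1$. So $\frac{d}{dt}\Phi(u_t)|_{t=1}=\langle\Phi'(u),w\rangle$ is an unproved chain rule. Testing the multiplier identity against $(u_t-u)/(t-1)\in H^1$ is legitimate, but it produces non-explicit terms such as $\langle\nabla_xu,\nabla_xu_t\rangle$ and $\int|u|^{\frac4{d-1}}u\,u_t$. Expanding $\Psi:=E-\lambda M-\mu Q$ at its critical point $u$ only gives $|\Psi(u_t)-\Psi(u)|\lesssim\|u_t-u\|_{H^1}^2$. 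This forces $\frac{d}{dt}\Psi(u_t)|_{t=1}=0$ only if $\|u_t-u\|_{H^1}=o(|t-1|^{1/2})$, which requires extra regularity and decay of $u$.

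The Pohozaev route would supply that regularity, but the multiplier equation is
\[
-(1-2\mu)\Delta_xu-\pt_y^2u-2\lambda u=\Bigl(1-\tfrac{2d}{d-1}\mu\Bigr)|u|^{\frac4{d-1}}u ,
\]
which is elliptic only for $\mu<\frac12$. At $\mu=\frac12$ it has no $x$-derivative, and for $\mu>\frac12$ it is of wave type with $y$ as time. On $\R^N$, with the full gradient in $Q$, the case $\mu>\frac12$ would still be elliptic; here the anisotropy breaks this. Since $\mu$ is exactly the unknown, you cannot invoke elliptic regularity. Your independence step has the same defect, because $Q'(u)=\kappa M'(u)$ contains no $y$-derivative at all.

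A clean repair uses minimality instead of the multiplier identity, and lets the scaling act only on $a(v)=\|\nabla_xv\|_2^2$, $b(v)=\|\pt_yv\|_2^2$ and $k(v)=\|v\|_{2^*}^{2^*}$. Work on the open set $H^1\setminus\{0\}$, where $a,k>0$. Set $\tau(v):=\bigl(a(v)/(\tfrac dD k(v))\bigr)^{1/(\sigma-2)}$ and $\Phi(v):=E(v_{\tau(v)})=\frac{\tau^2}{2}a+\frac12b-\frac{D-2}{2D}\tau^\sigma k$. This is a $C^1$ functional built from $C^1$ maps, and it never requires differentiating $t\mapsto v_t$ in $H^1$.

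Since $v_{\tau(v)}\in V(M(v))$ and $\tau(u)=1$, $u$ minimizes $\Phi$ on $S(c)$, so $\Phi'(u)=\lambda M'(u)$ by the one-constraint multiplier rule. Every term of $\Phi'$ involving $\tau'$ carries the factor $\pt_t\bigl(\frac{t^2}{2}a-\frac{D-2}{2D}t^\sigma k\bigr)|_{t=\tau}=\tau^{-1}Q(v_\tau)=0$. Hence $\Phi'(u)=E'(u)$, so $E'(u)=\lambda M'(u)$, which is the claim with $\beta=-2\lambda$. This argument needs neither linear independence nor a Pohozaev identity.
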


\subsection{Profile decomposition}\label{sec:profile}
We recall the static part of the critical profile decomposition on
\(\mathbb X=\R^d\times\T\).  The decomposition used below is the time-zero version of
the waveguide profile decomposition developed for energy-critical problems on
product spaces introduced by Hani and Pausader \cite{HaniPausader}, which in the semivirial
setting was further applied in \cite{Luo_energy_crit}.  Since our compactness argument is
purely variational, no time translations or nonlinear profiles are needed.  We
nevertheless keep the terminology of frames and profiles, because it is important
to distinguish scale-one profiles from Euclidean concentrating profiles.

Let
\[
        d_{\mathbb X}\bigl((x,y),(x',y')\bigr)
        :=|x-x'|+\operatorname{dist}_{\T}(y,y')
\]
be the product distance.  For \(z_0=(x_0,y_0)\in\mathbb X\), define the spatial
translation symmetry
\begin{equation*}
        \Pi_{z_0}f(x,y):=f(x-x_0,y-y_0).
\end{equation*}
Here subtraction in the second variable is understood modulo \(2\pi\).

The first object needed for the decomposition is the operator which inserts a whole-space profile into a small coordinate patch of the waveguide.  This is the mechanism by which loss of compactness at the critical Sobolev exponent is recorded as an \(\R^D\)-bubble.  We recall it before introducing frames.  Fix
\(\eta\in C_c^\infty(\R^D;[0,1])\) such that \(\eta=1\) on a neighbourhood of the
origin.  For \(\phi\in\dot H^1(\R^D)\) and \(N\ge1\), set
\begin{equation}
\label{eq:phiN}
        \phi_N(Z):=N^{\frac{D-2}{2}}\eta(N^{1/2}Z)\phi(NZ),
        \qquad Z=(X,Y)\in\R^d\times\R.
\end{equation}
Identifying \(\T\) with \([-\pi,\pi]\), we define \(T_N\phi\) on \(\R^d\times\T\) by
periodically extending \(\phi_N\) in the \(Y\)-variable.  Equivalently, after
identifying \((x,y)\in\R^d\times\T\) with \((x,y)\in\R^d\times[-\pi,\pi]\), we set
\(T_N\phi(x,y)=\phi_N(x,y)\) on the support of \(\phi_N\) and \(T_N\phi=0\)
outside the corresponding coordinate patch.  For large \(N\), the support of
\(\phi_N\) is contained in a ball of radius \(O(N^{-1/2})\), hence this definition is
unambiguous.

\begin{lemma}[Euclidean approximation, \cite{Luo_energy_crit}]\label{lem:euclidean-operator}
Let \(\phi\in\dot H^1(\R^D)\), and let \(T_N\phi\) be defined by
\eqref{eq:phiN}.  Then, as \(N\to\infty\),
\begin{align}
        \|T_N\phi\|_{L^2(\mathbb X)}^2&=o_N(1),\notag\\
        \|\nabla_xT_N\phi\|_{L^2(\mathbb X)}^2
        &=\|\nabla_X\phi\|_{L^2(\R^D)}^2+o_N(1),\label{eq:TNx}\\
        \|\partial_yT_N\phi\|_{L^2(\mathbb X)}^2
        &=\|\partial_Y\phi\|_{L^2(\R^D)}^2+o_N(1),\label{eq:TNy}\\
        \|T_N\phi\|_{L^{2^*}(\mathbb X)}^{2^*}
        &=\|\phi\|_{L^{2^*}(\R^D)}^{2^*}+o_N(1).\notag
\end{align}
Consequently,
\begin{equation*}
        \|\nabla_{x,y}T_N\phi\|_{L^2(\mathbb X)}^2
        =\|\nabla_{\R^D}\phi\|_{L^2(\R^D)}^2+o_N(1).
\end{equation*}
\end{lemma}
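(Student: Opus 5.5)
The plan is to reduce everything to a computation on $\R^D$ by undoing the scaling. Once $N$ is large, the function $\phi_N$ is supported in a ball of radius $O(N^{-1/2})$ around the origin, so it lies inside the slab $\R^d\times(-\pi,\pi)$. On that region the identification of $\R^d\times(-\pi,\pi)$ with a coordinate patch of $\mathbb X$ is a local isometry. Hence every norm in the statement of Lemma~\ref{lem:euclidean-operator} for $T_N\phi$ on $\mathbb X$ equals the corresponding norm of $\phi_N$ on $\R^D$, with $\partial_x,\partial_y$ corresponding to $\partial_X,\partial_Y$. It therefore suffices to prove the four asymptotics for $\phi_N$ on $\R^D$. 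The final identity for the full gradient then follows by adding \eqref{eq:TNx} and \eqref{eq:TNy}.

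Set $R:=N^{1/2}$ and change variables $W=NZ$, so that $dZ=N^{-D}\,dW$ and
\[
\phi_N(Z)=N^{\frac{D-2}{2}}\eta(W/R)\phi(W).
\]
For the critical norm, the scaling is exactly invariant:
\[
\|\phi_N\|_{L^{2^*}}^{2^*}=\int|\eta(W/R)\phi(W)|^{2^*}dW\to\|\phi\|_{2^*}^{2^*}
\]
by dominated convergence. This uses $\phi\in L^{2^*}(\R^D)$ from the Sobolev embedding, together with $0\le\eta\le1$ and $\eta=1$ near $0$. For the gradient,
\[
\nabla_Z\phi_N=N^{D/2}\Bigl[\eta(W/R)\nabla\phi(W)+R^{-1}(\nabla\eta)(W/R)\phi(W)\Bigr],
\]
so the map $Z\mapsto W$ turns $\|\nabla_X\phi_N\|_2^2$ into the squared $L^2(dW)$ norm of the $X$-components of the bracket, and similarly for $\partial_Y$. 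The main term $\eta(W/R)\nabla\phi$ converges to $\nabla\phi$ in $L^2$ componentwise, again by dominated convergence.

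The cutoff error term is the only step needing care. The function $\nabla\eta(\cdot/R)$ is supported in an annulus $A_R=\{aR\le|W|\le bR\}$. Hölder's inequality with exponents $2^*/2$ and $D/2$ gives
\[
\|R^{-1}(\nabla\eta)(W/R)\phi\|_{L^2}\lesssim R^{-1}|A_R|^{1/D}\|\phi\|_{L^{2^*}(A_R)}\lesssim\|\phi\|_{L^{2^*}(|W|\ge aR)}\to0.
\]
Expanding the square and using Cauchy--Schwarz for the cross term then yields \eqref{eq:TNx} and \eqref{eq:TNy}.

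Finally, for the mass, the same change of variables gives
\[
\|\phi_N\|_2^2=N^{-2}\int|\eta(W/R)\phi|^2\,dW\lesssim N^{-2}|B_{bR}|^{2/D}\|\phi\|_{2^*}^2\lesssim N^{-2}R^2=N^{-1}\to0.
\]
This completes the argument.
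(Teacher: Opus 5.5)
Your proposal is correct. The paper does not actually prove this lemma. It imports it from \cite[Lem.~3.14]{Luo_energy_crit}, where it is stated in ambient dimension $4$ for the full gradient, and the subsequent remark only asserts that the separated $x$- and $y$-asymptotics follow ``by applying the same argument componentwise.''

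Your argument supplies that verification directly, for every $D\ge3$:
\begin{itemize}
\item After the substitution $W=NZ$, the operator reduces to the cutoff $\eta(\cdot/N^{1/2})\phi$.
\item The $L^{2^*}$ norm and the Dirichlet integral are exactly scale invariant.
\item The main gradient term converges to $\nabla\phi$ component by component by dominated convergence.
\item The cutoff error is controlled by the scale-invariant H\"older bound $\|R^{-1}(\nabla\eta)(\cdot/R)\|_{L^D}\,\|\phi\|_{L^{2^*}(|W|\ge aR)}\to0$. This bound does not depend on which derivative component you look at, so the separated $x$- and $y$-statements come out directly instead of by appeal to ``the same argument.''
\end{itemize}

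You also get two by-products: the quantitative rate $\|T_N\phi\|_{L^2(\mathbb X)}^2\lesssim N^{-1}\|\phi\|_{L^{2^*}(\R^D)}^2$, and, for large $N$, the uniform bound $\|T_N\phi\|_{H^1(\mathbb X)}\lesssim\|\phi\|_{\dot H^1(\R^D)}$. Your argument could therefore replace the paper's citation and remark as a self-contained proof.
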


\begin{remark}
Lemma~\ref{lem:euclidean-operator} is precisely the static asymptotic statement
used in \cite[Lem. 3.14]{Luo_energy_crit}, with the ambient dimension \(4\)
replaced by \(D=d+1\) and the critical exponent \(2^*=2D/(D-2)\).  The separated
convergences \eqref{eq:TNx} and \eqref{eq:TNy} follow by applying the same
argument componentwise to the \(X\)- and \(Y\)-derivatives.  We shall use this
lemma as an input and will not repeat its proof.
\end{remark}

The next definition records the two possible ways in which compactness can fail for a bounded \(H^1\)-sequence: ordinary translation in the unbounded \(x\)-direction, and concentration at scales much smaller than the period of \(\T\).  These alternatives are encoded by static frames.

\begin{definition}[Static frames and profile operators]\label{def:static-frame}
A static frame is a sequence
\[
        \mathcal F=(N_n,z_n)_{n\ge1},
        \qquad N_n\ge1,
        \qquad z_n=(x_n,y_n)\in\mathbb X.
\]
There are two types of frames.
\begin{enumerate}
\item A \emph{\underline{scale-one}} frame satisfies \(N_n\equiv1\).  It acts on a profile
\(\phi\in H^1(\mathbb X)\) by
\begin{equation*}
        P_{\mathcal F,n}\phi:=\Pi_{z_n}\phi.
\end{equation*}
\item A \emph{\underline{Euclidean}} frame satisfies \(N_n\to\infty\).  It acts on a profile
\(\phi\in\dot H^1(\R^D)\) by
\begin{equation*}
        P_{\mathcal F,n}\phi:=\Pi_{z_n}T_{N_n}\phi.
\end{equation*}
\end{enumerate}
Two frames \(\mathcal F^j=(N_n^j,z_n^j)\) and \(\mathcal F^k=(N_n^k,z_n^k)\) are
called orthogonal if
\begin{equation}
\label{eq:frame-orthogonality}
        \left|\log\frac{N_n^j}{N_n^k}\right|
        +N_n^j d_{\mathbb X}(z_n^j,z_n^k)
        \longrightarrow\infty
        \qquad (n\to\infty).
\end{equation}
\end{definition}

\begin{remark}
When both frames are scale-one, \eqref{eq:frame-orthogonality} reduces to \(|x_n^j-x_n^k|\to\infty\), after
passing to a subsequence in the compact \(y\)-variable.  If one frame is scale-one
and the other Euclidean, orthogonality follows automatically from the logarithmic
scale separation.
\end{remark}

We now state the static decomposition in the form needed later.  Its role is to separate a minimizing sequence into scale-one objects, Euclidean bubbles, and a remainder which is small in the critical Lebesgue norm.  The Pythagorean expansions below are the quantitative input used in the compactness proof in Section~\ref{sec:mainproof}.

\begin{lemma}[Static critical profile decomposition, \cite{HaniPausader}]\label{lem:profile}
Let \((f_n)_n\) be a bounded sequence in \(H^1(\mathbb X)\).  After passing to a
subsequence, there exist a number \(J_*\in\N\cup\{\infty\}\), nonzero profiles
\((\phi^j)_{1\le j<J_*}\), pairwise orthogonal static frames
\((\mathcal F^j)_{1\le j<J_*}\), and remainders \((r_n^J)_{n}\subset H^1(\mathbb X)\)
such that, for each finite \(J<J_*\),
\begin{equation*}
        f_n=\sum_{j=1}^J P_{\mathcal F^j,n}\phi^j+r_n^J.
\end{equation*}
Each \(\phi^j\) belongs either to \(H^1(\mathbb X)\), in which case
\(\mathcal F^j\) is a scale-one frame, or to \(\dot H^1(\R^D)\), in which case
\(\mathcal F^j\) is a Euclidean frame.  Moreover, for every finite \(J<J_*\), one has
\begin{align}
        \|f_n\|_2^2
        &=\sum_{j=1}^J\|P_{\mathcal F^j,n}\phi^j\|_2^2+
          \|r_n^J\|_2^2+o_n(1),\label{eq:prof-mass}\\
        \|\nabla_x f_n\|_2^2
        &=\sum_{j=1}^J\|\nabla_xP_{\mathcal F^j,n}\phi^j\|_2^2+
          \|\nabla_xr_n^J\|_2^2+o_n(1),\label{eq:prof-x}\\
        \|\partial_y f_n\|_2^2
        &=\sum_{j=1}^J\|\partial_yP_{\mathcal F^j,n}\phi^j\|_2^2+
          \|\partial_yr_n^J\|_2^2+o_n(1),\notag\\
        \|f_n\|_{2^*}^{2^*}
        &=\sum_{j=1}^J\|P_{\mathcal F^j,n}\phi^j\|_{2^*}^{2^*}+
          \|r_n^J\|_{2^*}^{2^*}+o_n(1),\label{eq:prof-lp}
\end{align}
and the remainder is small in the critical Lebesgue norm:
\begin{equation}
\label{eq:prof-rem}
        \lim_{J\to J_*}\limsup_{n\to\infty}\|r_n^J\|_{2^*}=0.
\end{equation}
\end{lemma}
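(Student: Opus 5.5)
The plan is to follow the standard iterative extraction scheme for critical profile decompositions, adapted to the product geometry as in \cite{HaniPausader}. The main step is to extract one profile at a time from a bounded sequence. Once that works, the Pythagorean expansions follow from weak convergence in Hilbert spaces, and the smallness \eqref{eq:prof-rem} follows from an energy decrement argument.

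\emph{Step 1: refined Sobolev inequality.} First I would prove a refined inequality on $\mathbb X$. Let $P_{N}$ be Littlewood--Paley projections in the joint frequency variable $(\xi,k)\in\R^d\times\Z$, with $N$ dyadic and $N\ge1$, where low frequencies are grouped into $P_{\le1}$. The inequality to aim for is
\[
\|f\|_{2^*}\lesssim \|f\|_{H^1}^{1-\theta}\Bigl(\sup_{N\ge1,\,z\in\mathbb X}N^{-\frac{D-2}{2}}|P_Nf(z)|\Bigr)^{\theta}
\]
for some $\theta\in(0,1)$. I would prove it by the usual interpolation between the square function bound and Bernstein's inequality, using that $\mathbb X$ is locally $D$-dimensional. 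Consequently, if $\limsup_n\|f_n\|_{2^*}>0$, there exist $N_n\ge1$ and $z_n$ with $N_n^{-(D-2)/2}|P_{N_n}f_n(z_n)|\gtrsim\varepsilon$.

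\emph{Step 2: extracting a profile.} Pass to a subsequence and split into two cases.
\begin{itemize}
\item If $N_n$ stays bounded, set $N_n\equiv1$ and let $\phi$ be the weak $H^1(\mathbb X)$-limit of $\Pi_{-z_n}f_n$. The lower bound on $P_{N_n}f_n(z_n)$ tested against a fixed Schwartz kernel shows $\phi\neq0$.
\item If $N_n\to\infty$, rescale by setting $g_n(Z):=N_n^{-(D-2)/2}(\Pi_{-z_n}f_n)(Z/N_n)$ on the growing patch $|Z|\lesssim N_n$, and let $\phi$ be the weak $\dot H^1(\R^D)$-limit of $g_n$. Again $\phi\ne0$ by the kernel test.
\end{itemize}
In both cases the quantities $\|\phi\|_{H^1}$ and $\|\phi\|_{\dot H^1}$ are bounded below by a power of $\varepsilon$.

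Set $r_n:=f_n-P_{\mathcal F,n}\phi$. The weak convergence $\Pi_{-z_n}f_n\rightharpoonup\phi$ (or its rescaled analogue) kills the cross terms in $\|\nabla_x\cdot\|_2^2$, $\|\partial_y\cdot\|_2^2$ and $\|\cdot\|_2^2$. For Euclidean frames this uses Lemma~\ref{lem:euclidean-operator} to identify $\|\nabla_x P_{\mathcal F,n}\phi\|_2^2$ and $\|\partial_yP_{\mathcal F,n}\phi\|_2^2$ with the $\R^D$ quantities. The mass of $P_{\mathcal F,n}\phi$ is $o_n(1)$, so \eqref{eq:prof-mass} reduces to showing that the cross term with $f_n$ vanishes, which follows from Cauchy--Schwarz. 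The $L^{2^*}$ splitting \eqref{eq:prof-lp} comes from Rellich compactness on compact sets, which gives a.e.\ convergence of the translated or rescaled sequences, combined with the Brezis--Lieb lemma.

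\emph{Step 3: iteration.} Apply Step 2 repeatedly to the remainders. Orthogonality \eqref{eq:frame-orthogonality} of the new frame with all previous ones is proved by contradiction. If two frames were not orthogonal, the new profile would be a weak limit of a remainder that already converges weakly to zero in the old frame, forcing it to vanish. The same mechanism makes the cross terms between distinct profiles vanish, so the expansions hold for every finite $J$.

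By the decoupling in $H^1$, the quantity $\sum_j\|\phi^j\|^2$ is bounded. Hence the lower bound $\gtrsim\varepsilon_J^{C}$ on each extracted profile forces the concentration parameter $\varepsilon_J$ of $r_n^J$ to tend to zero. The refined inequality then gives \eqref{eq:prof-rem}.

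The main obstacle I expect is Step 2 for Euclidean frames: checking that the cutoff $\eta(N^{1/2}\cdot)$ and the periodization in $T_N$ are harmless. Concretely, I need $\Pi_{z_n}T_{N_n}\phi$ to be asymptotically the ``inverse'' of the rescaling, and the weak-limit and Brezis--Lieb arguments to be insensitive to the cutoff region. I would handle this by approximating $\phi$ in $\dot H^1$ by $C_c^\infty(\R^D)$ functions, for which $T_N\phi$ equals the pure rescaling for large $N$, and then arguing by density using Lemma~\ref{lem:euclidean-operator}.
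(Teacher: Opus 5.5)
Your proposal is correct and is essentially the argument behind the cited result. The paper does not reprove Lemma~\ref{lem:profile}; it imports it as the time-zero case of the Hani--Pausader decomposition, whose proof is exactly your scheme: a Littlewood--Paley refined Sobolev inequality on $\mathbb X$, extraction of a nonzero scale-one or Euclidean profile by testing against the Littlewood--Paley kernel, orthogonality of frames by contradiction, and an energy-decrement argument giving \eqref{eq:prof-rem}. Your plan to handle the cutoff $\eta(N^{1/2}\cdot)$ and the periodisation in $T_N$ by approximating $\phi$ in $\dot H^1(\R^D)$ with $C_c^\infty$ functions and invoking Lemma~\ref{lem:euclidean-operator} is the standard and adequate way to close the Euclidean case.
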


\begin{remark}
For Euclidean profiles, the quantities in \eqref{eq:prof-mass}--\eqref{eq:prof-lp}
are identified by Lemma~\ref{lem:euclidean-operator}.  In particular,
\begin{align*}
        \|P_{\mathcal F^j,n}\phi^j\|_2^2&=o_n(1),\\
        \|\nabla_xP_{\mathcal F^j,n}\phi^j\|_2^2
        &=\|\nabla_X\phi^j\|_{L^2(\R^D)}^2+o_n(1),\\
        \|\partial_yP_{\mathcal F^j,n}\phi^j\|_2^2
        &=\|\partial_Y\phi^j\|_{L^2(\R^D)}^2+o_n(1),\\
        \|P_{\mathcal F^j,n}\phi^j\|_{2^*}^{2^*}
        &=\|\phi^j\|_{L^{2^*}(\R^D)}^{2^*}+o_n(1).
\end{align*}
\end{remark}

\section{Strict upper bound for the ground state energy}\label{sec:strict}
This section proves Theorem~\ref{theorem strict bound}.  The proof is an Aubin--Talenti test-function argument.  The main point is that the loss of the Euclidean bubble tail in the gradient part is of order $R^{2-D}$, whereas the loss of the nonlinear term is only of order $R^{-D}$.

The proof uses a family of trial functions obtained from a Euclidean Sobolev optimizer.  The following normalization fixes the constants appearing in the comparison with the sharp Sobolev bubble.

Let $W$ be an Aubin--Talenti optimizer for the sharp Sobolev inequality on $\R^D$, chosen so that
\begin{equation}
\label{eq:W}
        W(Z)=\alpha_D(1+|Z|^2)^{-\frac{D-2}{2}},
        \qquad Z=(X,Y)\in\R^d\times\R,
\end{equation}
and
\begin{equation*}
        -\Delta W=W^{2^*-1}.
\end{equation*}
By the well-known equality case in the sharp Sobolev inequality,
\begin{equation*}
        K:=\int_{\R^D}|\nabla W|^2
        =\int_{\R^D}W^{2^*}
        =\mathcal S^{D/2}.
\end{equation*}
Since $W$ is radial,
\begin{equation}
\label{eq:radial-split}
        \int_{\R^D}|\nabla_XW|^2=\frac dD K,
        \qquad
        \int_{\R^D}|\partial_YW|^2=\frac1D K.
\end{equation}

Fix $\chi\in C_c^\infty([0,\infty);[0,1])$ with $\chi=1$ on $[0,1]$ and $\chi=0$ on $[2,\infty)$.  For $R\gg1$ set
\[
        \lambda_R:=\frac R\pi,
        \qquad
        L_R:=R^2,
        \qquad
        \eta_R(X):=\chi\left(\frac{|X|}{L_R}\right).
\]
For $y\in[-\pi,\pi]$ define
\begin{equation*}
        U_R(x,y):=
        \lambda_R^{\frac{D-2}{2}}
        \eta_R(\lambda_Rx)W(\lambda_Rx,\lambda_Ry).
\end{equation*}
Since $W(X,Y)$ is even in $Y$, $U_R(x,\pi)=U_R(x,-\pi)$.  Hence $U_R$ defines an element of $H^1(\mathbb X)$.  The $X$-cutoff is not needed in every dimension (recall that the Aubin--Talenti bubbles on $\R^D$ belong to $L^2$ in the case $D\geq 5$), but it makes $L^2$-admissibility uniform and its energy error is of lower order.

We next compute the three quantities which enter the semivirial and the energy of \(U_R\).  After the change of variables below, the waveguide manifold corresponds to a slab \(|Y|<R\) in \(\R^D\).  The negative correction to the energy will come from the Euclidean tail removed outside this slab.

Let
\[
        \Sigma_R:=\R^d_X\times(-R,R)_Y.
\]
Changing variables $X=\lambda_Rx$, $Y=\lambda_Ry$, define
\begin{align*}
        A_R&:=\|\nabla_xU_R\|_2^2
        =\int_{\Sigma_R}|\nabla_X(\eta_RW)|^2\,dX\,dY,\\
        B_R&:=\|\partial_yU_R\|_2^2
        =\int_{\Sigma_R}\eta_R^2|\partial_YW|^2\,dX\,dY,\\
        C_R&:=\|U_R\|_{2^*}^{2^*}
        =\int_{\Sigma_R}\eta_R^{2^*}W^{2^*}\,dX\,dY.
\end{align*}
Define the $Y$-tails
\begin{align*}
        a_R&:=\int_{\{|Y|>R\}}|\nabla_XW|^2\,dX\,dY,\\
        b_R&:=\int_{\{|Y|>R\}}|\partial_YW|^2\,dX\,dY,\\
        \gamma_R&:=\int_{\{|Y|>R\}}W^{2^*}\,dX\,dY.
\end{align*}

The following elementary estimate identifies the main-order size of the missing tail.  It is important that the gradient tail is of order \(R^{2-D}\), while the nonlinear tail is only \(O(R^{-D})\).

\begin{lemma}[Tail asymptotics]\label{lem:tail}
There exist constants $c_a,c_b>0$ such that
\[
        a_R\sim c_aR^{2-D},
        \qquad
        b_R\sim c_bR^{2-D},
        \qquad
        \gamma_R=O(R^{-D})=o(R^{2-D}).
\]
\end{lemma}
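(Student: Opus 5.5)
We will compute the three tails directly from the explicit profile \eqref{eq:W}, using the pointwise form of $W$ and its derivatives. Since $W=\alpha_D(1+|Z|^2)^{-\frac{D-2}{2}}$, we have
\[
        \nabla W(Z)=-(D-2)\alpha_D(1+|Z|^2)^{-\frac D2}Z,
\]
so that $|\nabla_XW|^2=(D-2)^2\alpha_D^2|X|^2(1+|Z|^2)^{-D}$, $|\partial_YW|^2=(D-2)^2\alpha_D^2Y^2(1+|Z|^2)^{-D}$ and $W^{2^*}=\alpha_D^{2^*}(1+|Z|^2)^{-D}$. Each tail is then an integral over $\{|Y|>R\}$ of an explicit function, and we will do the $X$-integration first at fixed $Y$.

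For fixed $Y$ with $|Y|>R\ge1$, we substitute $X=\sqrt{1+Y^2}\,\xi$ with $\xi\in\R^d$. This gives
\[
        \int_{\R^d}(1+|X|^2+Y^2)^{-D}\,dX=(1+Y^2)^{\frac d2-D}\int_{\R^d}(1+|\xi|^2)^{-D}d\xi
        =c_1(1+Y^2)^{-\frac{d}{2}-1},
\]
and
\[
        \int_{\R^d}|X|^2(1+|X|^2+Y^2)^{-D}\,dX=c_2(1+Y^2)^{-\frac d2},
\]
with finite constants $c_1,c_2>0$. Here we use $D=d+1$: the second constant $c_2=\int|\xi|^2(1+|\xi|^2)^{-d-1}d\xi$ is finite because the integrand decays like $|\xi|^{-2d}$. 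Integrating in $Y$ over $|Y|>R$ gives the three tails:
\begin{align*}
        a_R&=2(D-2)^2\alpha_D^2c_2\int_R^\infty(1+Y^2)^{-\frac d2}dY\sim\frac{2(D-2)^2\alpha_D^2c_2}{d-1}R^{1-d},\\
        b_R&=2(D-2)^2\alpha_D^2c_1\int_R^\infty Y^2(1+Y^2)^{-\frac d2-1}dY\sim\frac{2(D-2)^2\alpha_D^2c_1}{d-1}R^{1-d},\\
        \gamma_R&=2\alpha_D^{2^*}c_1\int_R^\infty(1+Y^2)^{-\frac d2-1}dY\lesssim R^{-d-1}.
\end{align*}
The convergence of these integrals uses $d\ge2$. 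Since $1-d=2-D$ and $-d-1=-D$, these are exactly the claimed rates, with $c_a,c_b>0$ given explicitly, and $\gamma_R=o(R^{2-D})$.

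The only step needing care is the asymptotic equivalence $\int_R^\infty(1+Y^2)^{-s}dY\sim R^{1-2s}/(2s-1)$ for $s>1/2$. This follows from $(1+Y^2)^{-s}=Y^{-2s}(1+O(Y^{-2}))$ uniformly for $Y\ge R$. The finiteness of $c_1,c_2$ is also routine. We expect no real obstacle.
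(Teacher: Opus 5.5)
Your proof is correct, but it evaluates the integrals differently from the paper.

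\textbf{The paper's route.} It rescales the whole tail region at once, $X=R\xi$, $Y=R\zeta$. This pulls out the factor $R^{2-D}$ (resp.\ $R^{-D}$) and leaves an integral over $\{|\zeta|>1\}$ of $|\xi|^2(R^{-2}+|\xi|^2+\zeta^2)^{-D}$. Dominated convergence then identifies $c_a$ and $c_b$ as $D$-dimensional integrals of the homogeneous limit profile, and $\gamma_R=O(R^{-D})$ follows directly.

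\textbf{Your route.} You integrate in $X$ first and use the fiberwise scaling $X=\sqrt{1+Y^2}\,\xi$. This reduces each tail \emph{exactly} to a one-dimensional integral of $(1+Y^2)^{-s}$ or $Y^2(1+Y^2)^{-s-1}$, after which the asymptotics are elementary. Your exponents $\tfrac d2-D=-\tfrac d2-1$ and $1+\tfrac d2-D=-\tfrac d2$ are right. Your constants also agree with the paper's: integrating the paper's limit integrand in $\xi$ at fixed $\zeta$ gives $c_2|\zeta|^{-d}$, hence $c_a=2(D-2)^2\alpha_D^2c_2/(d-1)$.

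\textbf{Trade-off.} Your approach gives exact formulas and explicit constants with no limit theorem. The cost is that it relies on the specific algebraic form $(1+|Z|^2)^{-D}$, since the fiberwise substitution must factor the denominator exactly. The paper's global rescaling is shorter and uses essentially only the homogeneous decay of $W$ and its gradient at infinity, together with a dominating function. It would therefore carry over to any profile with the same asymptotics.
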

\begin{proof}
Write $r=(|X|^2+Y^2)^{1/2}$.  From \eqref{eq:W},
\[
        W'(r)=-\alpha_D(D-2)r(1+r^2)^{-D/2}.
\]
Thus
\[
        \nabla_XW=W'(r)\frac Xr,
        \qquad
        \partial_YW=W'(r)\frac Yr
\]
and therefore
\begin{align*}
        |\nabla_XW|^2
        &=\alpha_D^2(D-2)^2\frac{|X|^2}{(1+|X|^2+Y^2)^D},\\
        |\partial_YW|^2
        &=\alpha_D^2(D-2)^2\frac{Y^2}{(1+|X|^2+Y^2)^D},\\
        W^{2^*}&=\alpha_D^{2^*}(1+|X|^2+Y^2)^{-D}.
\end{align*}
For $a_R$, set $X=R\xi$ and $Y=R\zeta$.  Then
\[
        a_R=\alpha_D^2(D-2)^2R^{2-D}
        \int_{\{|\zeta|>1\}}
        \frac{|\xi|^2}{(R^{-2}+|\xi|^2+\zeta^2)^D}
        \,d\xi\,d\zeta.
\]
Dominated convergence gives
\[
        R^{D-2}a_R\to
        \alpha_D^2(D-2)^2
        \int_{\{|\zeta|>1\}}
        \frac{|\xi|^2}{(|\xi|^2+\zeta^2)^D}
        \,d\xi\,d\zeta=:c_a>0.
\]
The integral is finite since the integrand is $O(|(\xi,\zeta)|^{-2D+2})$ at infinity, and $2D-2>D$ for $D\ge3$.  The proof for $b_R$ is identical and gives a positive constant
\[
        c_b:=\alpha_D^2(D-2)^2
        \int_{\{|\zeta|>1\}}
        \frac{\zeta^2}{(|\xi|^2+\zeta^2)^D}
        \,d\xi\,d\zeta>0.
\]
Finally,
\[
        \gamma_R
        =\alpha_D^{2^*}R^{-D}
        \int_{\{|\zeta|>1\}}
        (R^{-2}+|\xi|^2+\zeta^2)^{-D}
        \,d\xi\,d\zeta
        =O(R^{-D}).
\]
Since $D\ge3$, $R^{-D}=o(R^{2-D})$.
\end{proof}

The \(X\)-cutoff is inserted only to guarantee uniform admissibility in low dimensions.  The next lemma shows that it does not alter the leading-order slab-tail expansion.

\begin{lemma}[Effect of the $X$-cutoff]\label{lem:cutoff}
As $R\to\infty$,
\begin{align}
        A_R&=\frac dD K-a_R+o(R^{2-D}),\label{eq:Aexp}\\
        B_R&=\frac1D K-b_R+o(R^{2-D}),\notag\\
        C_R&=K-\gamma_R+o(R^{2-D}).\label{eq:Cexp}
\end{align}
\end{lemma}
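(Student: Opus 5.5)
We compare each slab integral with the corresponding full-space integral over \(\Sigma_R\) without cutoff, and then account for the cutoff error. Since \(\int_{\R^D}|\nabla_XW|^2=\frac dD K\) by \eqref{eq:radial-split}, we have
\[
\int_{\Sigma_R}|\nabla_XW|^2=\frac dD K-a_R,
\qquad
\int_{\Sigma_R}|\partial_YW|^2=\frac1D K-b_R,
\qquad
\int_{\Sigma_R}W^{2^*}=K-\gamma_R .
\]
So it suffices to show that replacing \(W\) by \(\eta_RW\) inside \(\Sigma_R\) changes each integral by \(o(R^{2-D})\). All cutoff errors live in the region \(\{|X|\ge L_R=R^2\}\cap\Sigma_R\), together with the gradient of the cutoff, which is supported in \(\{R^2\le|X|\le 2R^2\}\).

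For \(B_R\) and \(C_R\), the error is bounded by
\[
\int_{\{|X|>R^2,\,|Y|<R\}}\bigl(|\partial_YW|^2+W^{2^*}\bigr).
\]
Here we use the pointwise bounds \(|\nabla W|^2\lesssim (1+|Z|^2)^{1-D}\) and \(W^{2^*}\lesssim(1+|Z|^2)^{-D}\) from the proof of Lemma~\ref{lem:tail}. On the region \(|X|>R^2\), \(|Y|<R\) we have \(|Z|\sim|X|\). Integrating in \(Y\) over a length \(2R\) and in \(X\) over \(|X|>R^2\) gives
\[
\lesssim R\int_{R^2}^\infty \rho^{2-2D}\rho^{d-1}\,d\rho
= R\int_{R^2}^\infty\rho^{-D}\,d\rho
\sim R\cdot R^{2(1-D)}=R^{3-2D}.
\]
This is \(o(R^{2-D})\) precisely because \(3-2D<2-D\), that is, \(D>1\). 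The \(W^{2^*}\) term is even smaller. We also use \(0\le\eta_R\le1\) and \(\eta_R^{2^*}\le1\), so that the errors are controlled by these tail integrals.

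For \(A_R\), expand
\[
|\nabla_X(\eta_RW)|^2=\eta_R^2|\nabla_XW|^2+2\eta_RW\nabla_X\eta_R\cdot\nabla_XW+W^2|\nabla_X\eta_R|^2 .
\]
The first term is handled exactly as above. For the last term, \(|\nabla_X\eta_R|\lesssim R^{-2}\) on \(R^2\le|X|\le2R^2\), and there \(W^2\sim|X|^{4-2D}\sim R^{8-4D}\). The integration region has volume \(\sim R\cdot R^{2d}=R^{2D-1}\). The contribution is therefore
\[
\lesssim R^{-4}R^{8-4D}R^{2D-1}=R^{3-2D}=o(R^{2-D}).
\]
The cross term is bounded by Cauchy--Schwarz by the geometric mean of these two error bounds, so it is also \(O(R^{3-2D})\).

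Combining these estimates with the three identities from Lemma~\ref{lem:tail} gives \eqref{eq:Aexp}, the expansion for \(B_R\), and \eqref{eq:Cexp}. No step is a real obstacle. The only point requiring care is the bookkeeping of the slab width \(R\) against the cutoff radius \(R^2\), which is exactly why \(L_R=R^2\) was chosen rather than \(L_R\sim R\). With \(L_R\sim R\), the cutoff error would be of the same order \(R^{2-D}\) as \(a_R\), and the expansion would fail. I would double-check the exponent count \(\rho^{d-1}\rho^{2-2D}=\rho^{-D}\), which uses \(d=D-1\).
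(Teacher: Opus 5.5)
Your proposal is correct and follows the paper's proof essentially line by line: the $|X|>L_R$ part of the slab costs $\lesssim R L_R^{1-D}=R^{3-2D}=o(R^{2-D})$, and the cutoff-derivative term $\int W^2|\nabla_X\eta_R|^2$ is of the same size (you bound it by sup times volume rather than integrating radially, which makes no difference), with the cross term handled by Cauchy--Schwarz. One small bookkeeping point: the uncut identities such as $\int_{\Sigma_R}|\nabla_XW|^2=\frac dD K-a_R$ come from \eqref{eq:radial-split} and the definitions of $a_R,b_R,\gamma_R$, not from Lemma~\ref{lem:tail}.
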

\begin{proof}
The claim for $B_R$ and $C_R$ follows once we show that the part removed by $\eta_R$ in the region $|X|\gtrsim L_R$ is $o(R^{2-D})$.  On $|Y|<R$ and $|X|\gtrsim L_R$ we have
\[
        |\nabla W|^2\lesssim |X|^{-2D+2},
        \qquad
        W^2\lesssim |X|^{-2D+4},
        \qquad
        W^{2^*}\lesssim |X|^{-2D}.
\]
Therefore
\[
        \int_{|Y|<R,\ |X|>L_R}|\nabla W|^2
        \lesssim
        R\int_{L_R}^\infty \rho^{d-1}\rho^{-2D+2}\,d\rho.
\]
Since $d=D-1$, the right-hand side is
\[
        \lesssim R L_R^{1-D}=R^{3-2D}=o(R^{2-D}).
\]
Likewise,
\[
        \int_{|Y|<R,\ |X|>L_R}W^{2^*}
        \lesssim
        R\int_{L_R}^\infty \rho^{d-1}\rho^{-2D}\,d\rho
        \lesssim R L_R^{-D}=o(R^{2-D}).
\]
It remains to handle the derivative of the cutoff in $A_R$.  Since $|\nabla_X\eta_R|\lesssim L_R^{-1}$ and $\operatorname{supp}\nabla_X\eta_R\subset\{L_R\le |X|\le2L_R\}$,
\begin{align*}
        \int_{|Y|<R}|\nabla_X\eta_R|^2W^2
        &\lesssim L_R^{-2}R\int_{L_R}^{2L_R}\rho^{d-1}\rho^{-2D+4}\,d\rho \\
        &\lesssim R L_R^{1-D}=o(R^{2-D}).
\end{align*}
The cross term is controlled by Cauchy--Schwarz:
\begin{align*}
&\left|2\int_{\Sigma_R}\eta_RW\nabla_X\eta_R\cdot\nabla_XW\right|  \\
&\qquad\le
2\left(\int_{\operatorname{supp}\nabla\eta_R}|\nabla W|^2\right)^{1/2}
\left(\int_{\operatorname{supp}\nabla\eta_R}|\nabla\eta_R|^2W^2\right)^{1/2}
=o(R^{2-D}).
\end{align*}
Combining these estimates with \eqref{eq:radial-split} and the definitions of $a_R,b_R,\gamma_R$ gives \eqref{eq:Aexp}--\eqref{eq:Cexp}.
\end{proof}

Having all the preliminaries, we are now ready for giving the proof of Theorem \ref{theorem strict bound}.

\begin{proof}[Proof of Theorem~\ref{theorem strict bound}]
By Lemmas~\ref{lem:tail} and \ref{lem:cutoff},
\begin{align*}
        Q(U_R)
        &=A_R-\frac dD C_R \\
        &=-a_R+\frac dD\gamma_R+o(R^{2-D})<0
\end{align*}
for all sufficiently large $R$.  Moreover,
\begin{align*}
        E(U_R)
        &=\frac12(A_R+B_R)-\frac{D-2}{2D}C_R \\
        &=\frac KD-\frac12(a_R+b_R)+\frac{D-2}{2D}\gamma_R+o(R^{2-D}) \\
        &\le \frac KD-\kappa R^{2-D}
\end{align*}
for some $\kappa>0$ and all large $R$.  Since $K=\mathcal S^{D/2}$, this reads
\begin{equation}
\label{eq:energy-before-proj}
        E(U_R)\le \frac{\mathcal S^{D/2}}D-\kappa R^{2-D}.
\end{equation}

By Lemma~\ref{lem:projection}, there exists $t_R\in(0,1)$ such that $Q((U_R)_{t_R})=0$.  We next prove that this projection changes the energy only by a lower-order quantity.  Set
\[
        F_R(t):=Q((U_R)_t)=t^2A_R-\frac dD t^\sigma C_R,
        \qquad t>0.
\]
The equation $F_R(t)=0$ is equivalent to
\[
        A_R=\frac dD C_R t^{\sigma-2}.
\]
Since $Q(U_R)<0$, we have
\[
        A_R<\frac dD C_R.
\]
Hence the positive solution is explicitly given by
\[
        t_R=
        \left(\frac{A_R}{(d/D)C_R}\right)^{\!1/(\sigma-2)}
        \in(0,1).
\]
Moreover, direct computation gives
\[
        \frac{A_R}{(d/D)C_R}
        =1+\frac{A_R-(d/D)C_R}{(d/D)C_R}
        =1+\frac{Q(U_R)}{(d/D)C_R}.
\]
By \eqref{eq:Aexp} and \eqref{eq:Cexp} we have $C_R\to K>0$ and
\[
        Q(U_R)=-a_R+\frac dD\gamma_R+o(R^{2-D})=O(R^{2-D}).
\]
Hence 
\[
        \frac{A_R}{(d/D)C_R}=1+O(R^{2-D}).
\]
Since the map $s\mapsto s^{1/(\sigma-2)}$ is $C^1$ in a neighborhood of $s=1$ (recall that $\sigma>2$), we obtain
\begin{equation}
\label{eq:one-minus-t}
        1-t_R=O(R^{2-D}).
\end{equation}
In particular, $t_R\in[1/2,1]$ for all sufficiently large $R$.

It remains to estimate the energy variation along the scaling path.  By \eqref{scale:Ederivative},
\[
        E((U_R)_{t_R})-E(U_R)
        =\int_1^{t_R}s^{-1}F_R(s)\,ds.
\]
For $s\in[t_R,1]$, the derivative
\[
        F_R'(s)=2sA_R-\frac dD\sigma s^{\sigma-1}C_R
\]
is uniformly bounded, because $A_R$ and $C_R$ are bounded and $s\in[1/2,1]$.  Since $F_R(t_R)=0$, the mean value theorem gives
\[
        |F_R(s)|=|F_R(s)-F_R(t_R)|\lesssim |s-t_R|,
        \qquad s\in[t_R,1].
\]
Consequently,
\[
        |E((U_R)_{t_R})-E(U_R)|
        \le C\int_{t_R}^{1}(s-t_R)\,ds
        \lesssim (1-t_R)^2.
\]
Using \eqref{eq:one-minus-t},
\[
        |E((U_R)_{t_R})-E(U_R)|
        \lesssim R^{2(2-D)}=o(R^{2-D}),
\]
because $D\ge3$.  Together with \eqref{eq:energy-before-proj}, this yields
\begin{equation}
\label{eq:proj-strict}
        E((U_R)_{t_R})<\frac{\mathcal S^{D/2}}D
\end{equation}
for all sufficiently large $R$.

\eqref{eq:proj-strict} already gives a strict bound for the energy of some function with vanishing semivirial, but such functions may still have large mass. In the final step we shall use Lemma \ref{lem:monotonicity} to solve this issue. Let
\[
        \mu_R:=M((U_R)_{t_R})=M(U_R).
\]
We claim that $\mu_R\to0$ as $R\to\infty$.  Indeed,
\begin{equation*}
        M(U_R)=\lambda_R^{-2}\int_{\Sigma_R}\eta_R(X)^2W(X,Y)^2\,dX\,dY.
\end{equation*}
If $D\ge5$, then $W\in L^2(\R^D)$ and consequently $M(U_R)\lesssim R^{-2}\to0$.  If $D=4$, then
\[
        W(X,Y)^2\lesssim(1+|X|^2+Y^2)^{-2},
\]
and
\[
        \int_{\R^3}(1+|X|^2+Y^2)^{-2}\,dX
        \lesssim(1+Y^2)^{-1/2}.
\]
Hence
\[
        M(U_R)\lesssim R^{-2}\int_{-R}^R(1+Y^2)^{-1/2}\,dY
        \lesssim R^{-2}\log R\to0.
\]
If $D=3$, the cutoff is essential.  Since $W^2\lesssim(1+|X|^2+Y^2)^{-1}$ and $|X|\le2L_R=2R^2$ on the support of $\eta_R$,
\[
        \int_{|Y|<R,\ |X|<2L_R}W^2\,dX\,dY
        \lesssim R\log R,
\]
so $M(U_R)\lesssim R^{-1}\log R\to0$.  This proves $\mu_R\to0$. Thus $(U_R)_{t_R}\in V(\mu_R)$ and \eqref{eq:proj-strict} implies
\[
        m_{\mu_R}\le E((U_R)_{t_R})<\frac{\mathcal S^{D/2}}D.
\]
Given arbitrary $c>0$, choose $R$ so large that $0<\mu_R<c$.  By Lemma~\ref{lem:monotonicity},
\[
        m_c\le m_{\mu_R}<\frac{\mathcal S^{D/2}}D.
\]
This proves the theorem.
\end{proof}

\section{Proof of the main result}\label{sec:mainproof}
In this final section we give the proof of Theorem~\ref{main theorem}.  The proof follows the compactness scheme of \cite[Prop. 2.7]{Luo_energy_crit}; the new input is Theorem~\ref{theorem strict bound}, which excludes Euclidean critical bubbles at the variational level.

Let
\[
        m_{\mathrm{bub}}:=\frac{\mathcal S^{D/2}}D.
\]
By Theorem~\ref{theorem strict bound},
\begin{equation}
\label{eq:strict-again}
        m_c<m_{\mathrm{bub}}
        \qquad\text{for every }c>0.
\end{equation}

\begin{lemma}[Non-vanishing below the bubble level]\label{lem:nonvanishing}
Let $c>0$, and let $(u_n)_n\subset V(c)$ be a minimizing sequence for $m_c$.  Then there exist translations $x_n\in\R^d$ and a nonzero $u\in H^1(\mathbb X)$ such that, after passing to a subsequence,
\[
        u_n(\cdot+x_n,\cdot)\rightharpoonup u
        \qquad\text{weakly in }H^1(\mathbb X).
\]
\end{lemma}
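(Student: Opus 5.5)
Throughout, write $m_{\rm bub}=\mathcal S^{D/2}/D$.

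\emph{Boundedness.} Since $Q(u_n)=0$, Lemma~\ref{lem:projection}(2) gives
\[
E(u_n)=I(u_n)=\tfrac12\|\partial_yu_n\|_2^2+\tfrac1{2D}\|u_n\|_{2^*}^{2^*}\to m_c .
\]
Hence $\|\partial_yu_n\|_2$ and $\|u_n\|_{2^*}$ are bounded. Using $Q(u_n)=0$ again, $\|\nabla_xu_n\|_2^2=\frac dD\|u_n\|_{2^*}^{2^*}$ is bounded, and the mass equals $c$. So $(u_n)$ is bounded in $H^1(\mathbb X)$.

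\emph{Lower bound on the nonlinear term.} We also need $\|u_n\|_{2^*}^{2^*}\gtrsim 1$. Lemma~\ref{lemma gn additive} together with $Q(u_n)=0$ gives
\[
\|\nabla_xu_n\|_2^2\lesssim\|\nabla_xu_n\|_2^{2d/(d-1)}\bigl(c^{1/(d-1)}+\|\partial_yu_n\|_2^{2/(d-1)}\bigr).
\]
Since $2d/(d-1)>2$ and the last factor is bounded, $\|\nabla_xu_n\|_2\gtrsim1$. Consequently $\|u_n\|_{2^*}^{2^*}\gtrsim1$, so the profile decomposition cannot be trivial.

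\emph{Profile decomposition.} Apply Lemma~\ref{lem:profile} to $f_n=u_n$. If some profile is attached to a scale-one frame $(1,(x_n,y_n))$, then after a subsequence $y_n\to y_0$ and
\[
u_n(\cdot+x_n,\cdot)\rightharpoonup\phi^j(\cdot,\cdot-y_0)\neq0 ,
\]
which is the statement. The usual weak-limit characterization of profiles in Lemma~\ref{lem:profile} identifies it this way. The task is therefore to rule out the case in which all profiles are Euclidean.

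\emph{Excluding the all-Euclidean case.} Suppose every frame is Euclidean. By Lemma~\ref{lem:euclidean-operator} and the Pythagorean expansions, for each finite $J$,
\[
\|\nabla_xu_n\|_2^2\ge\sum_{j\le J}\|\nabla_X\phi^j\|^2+o_n(1),\qquad \|\partial_yu_n\|_2^2\ge\sum_{j\le J}\|\partial_Y\phi^j\|^2+o_n(1),
\]
\[
\|u_n\|_{2^*}^{2^*}=\sum_{j\le J}\|\phi^j\|_{2^*}^{2^*}+\|r_n^J\|_{2^*}^{2^*}+o_n(1).
\]
Pass to the limit $n\to\infty$ and then $J\to J_*$, using \eqref{eq:prof-rem}. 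With $\Phi:=\sum_j\|\phi^j\|_{L^{2^*}}^{2^*}=\lim\|u_n\|_{2^*}^{2^*}>0$, the constraint $Q(u_n)=0$ yields
\[
\sum_j\|\nabla_X\phi^j\|^2\le\tfrac dD\Phi,\qquad m_c=\lim I(u_n)\ge\tfrac12\sum_j\|\partial_Y\phi^j\|^2+\tfrac1{2D}\Phi.
\]

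\emph{Key step: an anisotropic Sobolev inequality.} The main obstacle is to convert these bounds into $m_c\ge m_{\rm bub}$, which contradicts \eqref{eq:strict-again}. The approach uses the $X$-dilation $\phi(X,Y)\mapsto\phi(\mu X,Y)$ on $\R^D$, under which
\[
\|\nabla_X\phi\|^2\mapsto\mu^{2-d}\|\nabla_X\phi\|^2,\qquad \|\partial_Y\phi\|^2\mapsto\mu^{-d}\|\partial_Y\phi\|^2,\qquad \|\phi\|_{2^*}^{2^*}\mapsto\mu^{-d}\|\phi\|_{2^*}^{2^*}.
\]
Optimizing $\mathcal S\|\phi\|_{2^*}^2\le\|\nabla_X\phi\|^2+\|\partial_Y\phi\|^2$ over $\mu$ gives
\[
\mathcal S\|\phi\|_{2^*}^2\le C_D\,\|\nabla_X\phi\|_2^{2d/D}\|\partial_Y\phi\|_2^{2/D},
\qquad C_D=D\,d^{-d/D}.
\]
Because $W$ is radial, \eqref{eq:radial-split} shows equality holds at $W$.

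Set $a=\sum_j\|\nabla_X\phi^j\|^2$ and $b=\sum_j\|\partial_Y\phi^j\|^2$. Since $2^*/2>1$, the sum is superadditive and Hölder applies, giving
\[
\Phi\le\bigl(C_D\mathcal S^{-1}\bigr)^{2^*/2}a^{\frac{d}{D}\frac{2^*}{2}}b^{\frac1D\frac{2^*}2}.
\]
Insert $a\le\frac dD\Phi$ to get a lower bound on $b$ in terms of $\Phi$. Then minimize $\tfrac12b+\tfrac1{2D}\Phi$ over $\Phi>0$. This should give exactly $\tfrac12\cdot\frac KD+\tfrac1{2D}K=K/D=m_{\rm bub}$, with equality attained by the configuration $a=\frac dDK$, $b=\frac1DK$, $\Phi=K$ of the bubble $W$. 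It may instead turn out that the minimum is only bounded below, in which case a monotonicity argument in $\Phi$ is needed; I expect this exponent bookkeeping to be the delicate part.

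Once $m_c\ge m_{\rm bub}$ is established, it contradicts \eqref{eq:strict-again}. Hence a scale-one profile exists, which proves the lemma.
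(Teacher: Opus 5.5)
Your proposal is correct and shares the paper's skeleton: boundedness and $L^{2^*}$ non-vanishing from $Q=0$ and the Gagliardo--Nirenberg inequality, the profile decomposition, a scale-one profile producing the nonzero weak limit, and, in the all-Euclidean case, a contradiction with $m_c<m_{\rm bub}$ via the anisotropic Sobolev inequality and AM--GM. The one genuine difference is how the all-Euclidean case is closed.

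The paper first selects a single Euclidean profile with $A_j\le\frac dD C_j$. This selection uses $\|\nabla_x r_n^J\|_2^2\ge0$ and the vanishing of $\|r_n^J\|_{2^*}$. The paper then applies the anisotropic Sobolev inequality to that one profile and concludes via $m_c\ge i_j$.

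You instead impose the constraint on all profiles together, $a\le\frac dD\Phi$. You then transfer the anisotropic inequality to the sums by H\"older (exponents $D/d$ and $D$) and by $\sum_j x_j^{D/(D-2)}\le(\sum_j x_j)^{D/(D-2)}$. This avoids the selection step. In exchange, it relies on the exact identity $\Phi=\lim\|u_n\|_{2^*}^{2^*}$, since $\Phi$ appears on both sides; the remainder estimate supplies this, and you do use it. The paper's route gives the more local statement that any single Euclidean profile with nonpositive semivirial already carries $I$-energy at least $m_{\rm bub}$.

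The exponent bookkeeping you flagged as delicate closes exactly. Raising your H\"older bound to the power $D-2$ gives $\mathcal S^D\Phi^{D-2}\le D^Dd^{-d}a^db\le D\,\Phi^{d}b$. Since $\Phi>0$ and $d=D-1$, this yields $b\Phi\ge\mathcal S^D/D$. Hence $\frac12b+\frac1{2D}\Phi\ge\sqrt{b\Phi/D}\ge\mathcal S^{D/2}/D=m_{\rm bub}$, and no monotonicity argument in $\Phi$ is needed.
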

\begin{proof}
Since $Q(u_n)=0$,
\[
        \|\nabla_xu_n\|_2^2=\frac dD\|u_n\|_{2^*}^{2^*}.
\]
Therefore
\begin{align*}
        E(u_n)
        &=\frac12\|\partial_yu_n\|_2^2
          +\frac1{2d}\|\nabla_xu_n\|_2^2.
\end{align*}
As $E(u_n)\to m_c<\infty$ and $M(u_n)=c$, the sequence is bounded in $H^1(\mathbb X)$.

We also claim that the $L^{2^*}$ norm does not vanish.  By Lemma \ref{lemma gn additive} and $Q(u_n)=0$,
\[
        \|\nabla_xu_n\|_2^2
        =\frac dD\|u_n\|_{2^*}^{2^*}
        \le C\|\nabla_xu_n\|_2^{\frac{2d}{d-1}}
        \left(c^{\frac1{d-1}}+\|\partial_yu_n\|_2^{\frac2{d-1}}\right).
\]
The factor in parentheses is uniformly bounded.  Since $u_n\ne0$, $\|\nabla_xu_n\|_2>0$.  Dividing by $\|\nabla_xu_n\|_2^2$ gives
\[
        1\le C\|\nabla_xu_n\|_2^{\frac2{d-1}},
\]
after adjusting $C$.  Hence
\begin{equation}
\label{eq:nonzero-grad}
        \liminf_{n\to\infty}\|\nabla_xu_n\|_2^2>0,
        \qquad
        \liminf_{n\to\infty}\|u_n\|_{2^*}^{2^*}>0.
\end{equation}

Assume by contradiction that every sequence of $x$-translations has weak limit zero.  Apply now Lemma~\ref{lem:profile}. We show first that no nonzero scale-one frames can occur.  Indeed, if such a profile existed, then for some frame \(\mathcal F^j=(1,(x_n^j,y_n^j))\) the corresponding term would be \(\Pi_{(x_n^j,y_n^j)}\phi^j\) with \(0\ne\phi^j\in H^1(\mathbb X)\).  Since \(\T\) is compact, after passing to a subsequence we may assume \(y_n^j\to y_\infty\).  Translating \(u_n\) by \(x_n^j\) in the Euclidean variables and using the orthogonality of all other frames, we would obtain a nonzero weak limit, namely a translate of \(\phi^j\) in the \(y\)-variable.  This contradicts the assumed weak vanishing, and hence all nonzero profiles are Euclidean.  By \eqref{eq:nonzero-grad}, there exists at least one  nonzero Euclidean profile. W.l.o.g. we may assume in the following that all profiles $\phi^j$ are nonzero and Euclidean for any $j<J_*$.

For a Euclidean profile $\phi^j\in\dot H^1(\R^D)$ write, with $Z=(X,Y)$,
\[
        A_j:=\|\nabla_X\phi^j\|_{L^2(\R^D)}^2,
        \quad
        B_j:=\|\partial_Y\phi^j\|_{L^2(\R^D)}^2,
        \quad
        C_j:=\|\phi^j\|_{L^{2^*}(\R^D)}^{2^*}.
\]
Set
\[
        q_j:=A_j-\frac dD C_j,
        \qquad
        i_j:=\frac12B_j+\frac1{2D}C_j.
\]
From the decompositions \eqref{eq:prof-x}--\eqref{eq:prof-lp} and $Q(u_n)=0$, for finite $J$,
\begin{align}
        0=\sum_{j=1}^Jq_j
        +\left(\|\nabla_xr_n^J\|_2^2-\frac dD\|r_n^J\|_{2^*}^{2^*}\right)+o_n(1).\label{new}
\end{align}
Since \eqref{eq:prof-rem} makes $\|r_n^J\|_{2^*}^{2^*}$ arbitrarily small as $J\to J_*$, at least one nonzero Euclidean profile satisfies
\begin{equation}
\label{eq:qj-negative}
        A_j-\frac dD C_j\le0.
\end{equation}
Indeed, if all nonzero profiles had $q_j>0$, choosing one fixed nonzero profile and then taking $J$ large enough so that $\|r_n^J\|_{2^*}^{2^*}$ is negligible would contradict \eqref{new}.

From now on we fix some $j$ so that \eqref{eq:qj-negative} is fulfilled. We may simply drop the index $j$ and write $A,B,C,i$.  Thus
\begin{equation}
\label{eq:A-le-C}
        A\le\frac dD C.
\end{equation}
For $\alpha,\beta>0$ define
\[
        \phi_{\alpha,\beta}(X,Y)
        :=(\alpha^d\beta)^{\frac{D-2}{2D}}\phi(\alpha X,\beta Y).
\]
Then $\|\phi_{\alpha,\beta}\|_{2^*}=\|\phi\|_{2^*}$ and, with $r:=\alpha/\beta$,
\[
        \|\nabla_X\phi_{\alpha,\beta}\|_2^2=r^{2/D}A,
        \qquad
        \|\partial_Y\phi_{\alpha,\beta}\|_2^2=r^{-2d/D}B.
\]
Applying the sharp Sobolev inequality to $\phi_{\alpha,\beta}$ and optimizing over $r>0$ yields
\begin{equation}
\label{eq:anisotropic-sob}
        \mathcal S C^{2/2^*}
        \le \inf_{r>0}\left(r^{2/D}A+r^{-2d/D}B\right)
        =D d^{-d/D}A^{d/D}B^{1/D}.
\end{equation}
Raising \eqref{eq:anisotropic-sob} to the power $D$ and using \eqref{eq:A-le-C},
\[
        \mathcal S^D C^{D-2}
        \le D^D d^{-d}A^dB
        \le D^D d^{-d}\left(\frac dD C\right)^dB
        =D C^dB.
\]
Since $D=d+1$, this gives
\begin{equation*}
        BC\ge \frac{\mathcal S^D}{D}.
\end{equation*}
Therefore
\begin{equation*}
        i=\frac12B+\frac1{2D}C
        \ge \sqrt{\frac{BC}{D}}
        \ge \frac{\mathcal S^{D/2}}D
        =m_{\mathrm{bub}}.
\end{equation*}
On the other hand, using $I=E$ on $Q=0$ and the profile decomposition for $I$,
\[
        m_c=\lim_{n\to\infty}I(u_n)
        \ge i_j\ge m_{\mathrm{bub}},
\]
contradicting \eqref{eq:strict-again}.  The contradiction proves the lemma.
\end{proof}

With help of Lemma \ref{lem:nonvanishing} we are finally able to give the proof for Theorem~\ref{main theorem}.

\begin{proof}[Proof of Theorem~\ref{main theorem}]
This is essentially the same proof as the one for \cite[Prop. 2.7]{Luo_energy_crit}, for the sake of completeness we give the full details here. Fix $c>0$ and let $(u_n)_n\subset V(c)$ be a minimizing sequence.  By Lemma~\ref{lem:nonvanishing}, after translating in $x$ and passing to a subsequence we may assume
\[
        u_n\rightharpoonup u\ne0
        \qquad\text{weakly in }H^1(\mathbb X).
\]
By replacing $u_n$ with $|u_n|$, and using the diamagnetic inequality and Lemma \ref{lem:lecoz}, we may suppose $u_n\ge0$ and hence $u\ge0$.

Let
\[
        c_1:=M(u)\in(0,c].
\]
Set $v_n:=u_n-u$.  By the Hilbert space decomposition and the Brezis--Lieb lemma \cite{BrezisLieb1983},
\begin{align}
        M(u_n)&=M(u)+M(v_n)+o(1),\notag\\
        Q(u_n)&=Q(u)+Q(v_n)+o(1),\label{eq:BL-Q}\\
        I(u_n)&=I(u)+I(v_n)+o(1).\notag
\end{align}
We first show that $Q(u)\le0$.  Suppose $Q(u)>0$.  Since $Q(u_n)=0$, \eqref{eq:BL-Q} implies $Q(v_n)<0$ for large $n$.  By Lemma~\ref{lem:projection}, there exists $t_n\in(0,1)$ such that $Q((v_n)_{t_n})=0$ and
\[
        I((v_n)_{t_n})<I(v_n).
\]
Moreover $M(v_n)=c-c_1+o(1)<c$ for large $n$.  Lemmas~\ref{lem:lecoz} and \ref{lem:monotonicity} then imply
\begin{align*}
        m_c
        &\le m_{M(v_n)}
        \le I((v_n)_{t_n})
        <I(v_n) \\
        &=I(u_n)-I(u)+o(1)
        =m_c-I(u)+o(1).
\end{align*}
Letting $n\to\infty$ it follows $I(u)\le0$.  Since
\[
        I(u)=\frac12\|\partial_yu\|_2^2+\frac1{2D}\|u\|_{2^*}^{2^*}
\]
and $u\ne0$, we get a contradiction, and consequently $Q(u)\le0$. We next show that $Q(u)$ can also not be negative.  If $Q(u)<0$, Lemma~\ref{lem:projection} yields some $s\in(0,1)$ with $Q(u_s)=0$ and $I(u_s)<I(u)$.  Since $M(u_s)=c_1$, by Lemmas~\ref{lem:lecoz} and \ref{lem:monotonicity},
\[
        m_{c_1}\le I(u_s)<I(u)
        \le \liminf_{n\to\infty}I(u_n)=m_c\le m_{c_1},
\]
a contradiction.  Therefore
\begin{equation*}
        Q(u)=0.
\end{equation*}
Thus $u\in V(c_1)$ and
\[
        m_{c_1}\le E(u)=I(u)\le m_c\le m_{c_1}.
\]
Consequently
\begin{equation}
\label{eq:u-minimizer-c1}
        E(u)=m_{c_1}=m_c,
\end{equation}
and $u$ is an optimizer for $m_{c_1}$. 

By Lemma \ref{minimizer is solution}, $u$ solves the Euler-Lagrange equation
\begin{equation}
\label{eq:EL-final}
        -\Delta_{x,y}u+\beta u=u^{2^*-1}.
\end{equation}
We prove that $\beta>0$.  Testing \eqref{eq:EL-final} against $u$ gives
\[
        \|\nabla_xu\|_2^2+\|\partial_yu\|_2^2+\beta M(u)=\|u\|_{2^*}^{2^*}.
\]
Using $Q(u)=0$, i.e. $\|\nabla_xu\|_2^2=\frac dD\|u\|_{2^*}^{2^*}$, we obtain
\begin{equation}
\label{eq:beta-identity}
        \|\partial_yu\|_2^2+\beta M(u)=\frac1D\|u\|_{2^*}^{2^*}.
\end{equation}
Now introduce the mass-changing scaling
\begin{equation*}
        T_\lambda u(x,y):=\lambda^{\frac{d-1}{2}}u(\lambda x,y),
        \qquad \lambda>0.
\end{equation*}
Then
\begin{align}
        M(T_\lambda u)&=\lambda^{-1}M(u),\nonumber\\
        Q(T_\lambda u)&=\lambda Q(u)=0,\nonumber\\
        E(T_\lambda u)&=\frac\lambda{2D}\|u\|_{2^*}^{2^*}+\frac{\lambda^{-1}}2\|\partial_yu\|_2^2.\label{new2}
\end{align}
For $\lambda>1$, $M(T_\lambda u)=c_1/\lambda<c_1$.  Since $u$ minimizes $m_{c_1}$ and $c\mapsto m_c$ is nonincreasing,
\[
        E(T_\lambda u)\ge m_{c_1/\lambda}\ge m_{c_1}=E(u).
\]
Taking the right derivative at $\lambda=1$ yields
\begin{equation}
\label{eq:beta-nonneg}
        \frac1D\|u\|_{2^*}^{2^*}-\|\partial_yu\|_2^2\ge0.
\end{equation}
Combining \eqref{eq:beta-identity} and \eqref{eq:beta-nonneg} gives $\beta M(u)\ge0$.  Since $M(u)=c_1>0$, $\beta\ge0$.

It remains to exclude $\beta=0$.  If $\beta=0$, then
\begin{equation}
\label{eq:critical-beta-zero}
        -\Delta_{x,y}u=u^{\frac{D+2}{D-2}}.
\end{equation}
By the Brezis--Kato estimate and local elliptic regularity, $u\in C^2(\R^d\times\T)$; see \cite{BrezisKato,Struwe1996}.  The strong maximum principle gives $u>0$.  Lifting $u$ periodically along the $y$-direction to $\R^D$, we obtain a positive $C^2$ solution of \eqref{eq:critical-beta-zero} on $\R^D$.  The Caffarelli--Gidas--Spruck classification theorem \cite{Liouville3} implies that such a positive solution must be an Aubin--Talenti bubble
\[
        u(Z)=a\left(1+b|Z-Z_0|^2\right)^{-\frac{D-2}{2}}
\]
with $a,b>0$.  Such a function is not periodic in the $Y$-variable unless it is identically zero, contradicting $u>0$.  Hence $\beta>0$.

Finally, we show that no mass is lost, namely $c_1=c$.  Suppose by contradiction that $c_1<c$.  From \eqref{eq:u-minimizer-c1}, $m_{c_1}=m_c$.  Since $c\mapsto m_c$ is nonincreasing, it follows that $m_\rho=m_{c_1}$ for every $\rho\in[c_1,c]$.  If $0<\lambda<1$ is sufficiently close to $1$, then $M(T_\lambda u)=c_1/\lambda\in[c_1,c]$, and hence
\[
        E(T_\lambda u)\ge m_{c_1/\lambda}=m_{c_1}=E(u).
\]
For $\lambda>1$ sufficiently close to $1$, the inequality $E(T_\lambda u)\ge E(u)$ was already obtained from monotonicity, because $m_{c_1/\lambda}\ge m_{c_1}$.  Therefore $\lambda=1$ is a genuine local minimizer of the differentiable function $\lambda\mapsto E(T_\lambda u)$, and consequently
\[
        \left.\frac d{d\lambda}E(T_\lambda u)\right|_{\lambda=1}=0.
\]
Using the explicit formula for $E(T_\lambda u)$ given in \eqref{new2}, this yields
\begin{equation}
\label{eq:equality-derivative}
        \|\partial_yu\|_2^2=\frac1D\|u\|_{2^*}^{2^*}.
\end{equation}
Combining \eqref{eq:equality-derivative} with \eqref{eq:beta-identity}, we obtain
\[
        \beta M(u)=0,
\]
contradicting $\beta>0$ and $M(u)>0$.  Therefore $c_1=c$.

Thus $u\in V(c)$ and $E(u)=m_c$.  Since $u\ge0$ solves \eqref{eq:EL-final} with $\beta>0$, the strong maximum principle gives $u>0$.  This completes the proof of Theorem~\ref{main theorem}.
\end{proof}

\subsubsection*{Acknowledgements}
Y. Luo was supported by the NSF grant of Guangdong (No. 2024A1515010497), the QB-Program of Guangdong (No. 2024QN11X141) and the NSF grant of China (No. 12301301).

\subsubsection*{Data availability}
Data sharing not applicable to this article as no datasets were generated or analysed during the current study.


\end{document}